\newtheorem{theorem}{Theorem}[section]
\newtheorem{corollary}[theorem]{Corollary}
\newtheorem{claim}[theorem]{Claim}
\newtheorem{proposition}[theorem]{Proposition}
\newtheorem{question}[theorem]{Question}
\newtheorem{problem}[theorem]{Problem}
\theoremstyle{definition}
\newtheorem{example}[theorem]{Example}
\newtheorem{remark}[theorem]{Remark}
\newtheorem{algorithm}{Algorithm}
\def\e{\epsilon}
\def\s{\sigma}
\def\t{\tau}
\def\b{\beta}
\def\a{\alpha}
\def\lam{\lambda}
\def\id{\mathop{id}}
\def\inv{^{-1}}
\def\cen{C_{S_n}}
\def\ms{\mathcal{S}}
\def\mc{\mathcal{C}}
\def\set{{\rm set}}
\title{Blocks in cycles and $k$-commuting permutations}
\author{Rutilo Moreno~ and Luis Manuel Rivera}
\date{ }
\begin{document}
\maketitle

\begin{abstract}
Let $k$ be a nonnegative integer, and let $\a$ and $\b$ be two permutations of $n$ symbols.  We say that $\a$ and $\b$ $k$-commute if $H(\a\b, \b\a)=k$, where $H$ denotes the Hamming metric between permutations. In this paper, we consider the problem of finding the permutations that $k$-commute with a given permutation. Our main result is a characterization of permutations that $k$-commute with a given permutation $\b$ in terms of blocks in cycles in the decomposition of $\b$ as a product of disjoint cycles. Using this characterization, we provide formulas for the number of permutations that $k$-commute with a transposition, a fixed-point free involution and an $n$-cycle,  for any $k$. Also, we determine the number of permutations that $k$-commute with any given permutation,  for $k \leq 4$.
\end{abstract}

Keywords:
Symmetric group, Hamming metric, conjugate permutations, blocks in cycles.

\tableofcontents


\section{Introduction}

The symmetric group as a metric space has been studied with different metrics and for different purposes, see, for example, \cite{dezahuang, diaconis, far, schia}, and the metric that seems to be more used is the Hamming metric. This metric was introduced in 1950 by R. W. Hamming \cite{hamming} for the case of  binary strings and in connection with digital communications. For the case of permutations it was used in an implicit way by H. K. Farahat~\cite{far} who studied the symmetries of the metric space $(S_n, H)$, where $S_n$ denotes the symmetric group on $[n]:=\{1, \dots  ,n\}$ and $H$ the Hamming metric between permutations. Also, D. Gorenstein, R. Sandler and W. H. Mills \cite{gor} studied a problem about permutations that almost commute, in the sense of normalized Hamming metric.  Other problems studied in this metric space are the packing and
covering problem (see, e.g., \cite{quis}), and also permutation codes (see, e.g., \cite{came}) which have turned out to be useful in applications to power line communications (see, e.g. \cite{chu}). 

Our interest in the symmetric group as a metric space, with the Hamming metric, arose from the study of  sofic groups, a class of groups of growing interest (see, e.g.,  \cite{cag, pest}) that was first defined by M. Gromov~\cite{gromov} as a common generalization of residually finite groups and of amenable groups. To the best of the author's knowledge it is an open question to determine if all groups are sofic. The following, Theorem 3.5 in \cite{pest}, shows  the importance of the  Hamming metric $H$ on the symmetric group. Let $H_n(\a, \b)=H(\a, \b)/n$, for every $\a, \b \in S_n$.
\begin{theorem}\label{sofic} A group $G$ is sofic if and only if for evey finite $F \subseteq G$ and for each $\varepsilon >0$, there exist a natural $n$ and a mapping $\theta: F \rightarrow S_n$ so that 
\begin{enumerate}
\item if $g, h, gh \in F$, then $H_n\left(\theta(g)\theta(h), \theta(gh)\right) < \varepsilon$,
\item if the identity $e$ of $G$ belongs to  $F$ then $H_n(\theta(e), \id) \leq \varepsilon$, and
\item for all distinct $g, h \in F$, $H_n(\theta(g), \theta(h)) \geq 1/4$.
\end{enumerate}
\end{theorem}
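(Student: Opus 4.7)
The plan is to treat the theorem as an equivalence and prove each direction separately. For the backward direction, the stated family of maps is manifestly a Gromov-style sofic approximation of $G$, so no work is required beyond matching the chosen definition of soficity. The substantive content is the forward direction: starting from any of the equivalent definitions of soficity (e.g.\ via approximate embeddings $\theta: F\to S_n$ with some fixed positive Hamming separation $\delta_0>0$ and arbitrarily small homomorphism defect), the task is to upgrade the approximation so that conditions (1)--(3) hold with the specific constants prescribed, while preserving approximate multiplicativity.

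Condition (2) can be arranged first, by pre-composing with $\theta(e)^{-1}$. Since $\theta$ is an approximate homomorphism and $e\cdot e=e$, the permutation $\theta(e)$ satisfies $H_n(\theta(e)^2,\theta(e))<\varepsilon$, which forces $\theta(e)$ to agree with $\id$ off a set of small proportion; replacing $\theta$ by $g\mapsto\theta(e)^{-1}\theta(g)$ then makes $\theta(e)=\id$ exactly while perturbing the defects in (1) and (3) only by an $O(\varepsilon)$ term. To obtain the separation $1/4$ in~(3), I would apply a tensor-power amplification: embed $S_n\hookrightarrow S_{n^k}$ via the coordinatewise action on $[n]^k$ and set $\Theta(g):=\theta(g)^{\otimes k}$. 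A direct calculation yields
\[ H_{n^k}\bigl(\Theta(g),\Theta(h)\bigr)=1-\bigl(1-H_n(\theta(g),\theta(h))\bigr)^k, \]
while the identity $\Theta(g)\Theta(h)=\bigl(\theta(g)\theta(h)\bigr)^{\otimes k}$ gives
\[ H_{n^k}\bigl(\Theta(g)\Theta(h),\Theta(gh)\bigr)\le k\,H_n\bigl(\theta(g)\theta(h),\theta(gh)\bigr). \]
Choosing $k$ large enough that $1-(1-\delta_0)^k>1/4$ and feeding in an approximation with initial homomorphism defect less than $\varepsilon/k$ then produces a $\Theta$ with defect $<\varepsilon$ and separation $>1/4$, so all three conditions hold. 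The constant $1/4$ is not special here; the same argument works for any fixed constant in $(0,1)$.

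The main obstacle is the trade-off built into the amplification step: tensoring sharpens separation but multiplies the homomorphism defect by a factor of roughly $k$, so one must start with a much finer initial approximation to compensate. Organising this $\varepsilon$--$k$ bookkeeping while simultaneously enforcing $\theta(e)=\id$, without the latter adjustment reintroducing error into the former, is the central technical nuisance; everything else reduces to definition-chasing and routine manipulations of the Hamming metric.
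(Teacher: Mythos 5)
The paper does not prove this statement at all: it is quoted verbatim as Theorem~3.5 of Pestov's survey \cite{pest} and used purely as background motivation, so there is no internal proof to compare yours against. Your tensor-power amplification argument (diagonal action on $[n]^k$, with $H_{n^k}(\Theta(g),\Theta(h))=1-(1-H_n(\theta(g),\theta(h)))^k$ versus a defect growing only linearly in $k$) is the standard proof of this equivalence, essentially the Elek--Szab\'o argument reproduced in the cited reference, and your formulas check out. One caveat on your framing: which direction is ``manifest'' and which needs the amplification depends entirely on the baseline definition of soficity you adopt --- under the common Gromov--Weiss definition requiring separation $\geq 1-\varepsilon$, the forward implication is the trivial one and it is the \emph{backward} direction (boosting $1/4$ up to $1-\varepsilon$) that consumes the amplification lemma, the opposite of what you assert; since you never pin down your starting definition, you should do so explicitly, as otherwise the sentence ``the stated family of maps is manifestly a Gromov-style sofic approximation'' is false for the most standard choice. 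The underlying mathematical content is the same either way, so this is a presentational gap rather than a mathematical one.
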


As $\theta$ is not necessarily a group homomorphism between $G$ and $S_n$ then permutations $\theta(gh)$ and $\theta(g)\theta(h)$ can be different,  and condition $(1)$ in previous theorem ask for a ``small"  difference between them. Motivated by this, L. Glebsky and the second author defined the concept of stability of a system of equations in permutations \cite[Def. 1]{glebriv1}. For the convenience of the reader, we
remember here some of these definitions.   Let $w(x_1,\dots,x_k)=x_{i_1}^{\varepsilon_1}x_{i_2}^{\varepsilon_2} \dots  x_{i_l}^{\varepsilon_l}$, $u(x_1,\dots,x_k)=x_{j_1}^{\varepsilon'_1}x_{j_2}^{\varepsilon'_2} \dots  x_{j_m}^{\varepsilon'_l}$ be expressions using $x_h \in \{x_1, \dots, x_k\}$ and  $\varepsilon_i= \pm 1$,  $\varepsilon'_j= \pm 1$ (we may think
that $w, u$ are words in $\{x_1,x_1^{-1},\dots,x_k, x_k^{-1}\}$). We say that permutations $\a_1,\dots, \a_k$ are an {\it $\e$-solution} of 
equation  $w(x_1,\dots,x_k)=u(x_1,\dots,x_k)$,
if and only if  $H_n(w(\a_1,\dots,\a_k), u(\a_1,\dots,\a_k))\leq\e$. 
 We say that permutations $\a_1,\dots,\a_k$ are an {\it $\e$-solution} of a system of equations
\begin{equation}\label{eq2}
w_i(x_1,\dots,x_k)=u_i(x_1,\dots,x_k),\;\; i=1,\dots,r
\end{equation}
 if and only if $\a_1,\dots,\a_k$ are an 
$\e$-solution for each equation of the system. The
  system of equations (\ref{eq2}) is called {\it stable in permutations} if and only if there exists
$\delta_\e $, $\lim\limits_{\e\to 0}\delta_\e=0$, such that for any
$\e$-solution $\a_1,\dots, \a_k\in S_n$ of system~(\ref{eq2}), 
there exists an exact solution $\tilde \a_1,\dots,\tilde
\a_k\in S_n$ of ~(\ref{eq2}) such that $H_n(\a_i,\tilde
\a_i)\leq\delta_\e$ for $i=1,\dots,k$. 

Glebsky et. al., \cite{glebriv1}  
 showed that the stability in permutations of system~(\ref{eq2}) is related with the properties of the finitely presented group 
\begin{equation}\label{group1}
 G=\langle x_1, \dots  , x_k \;|\; w_i(x_1, \dots  , x_k)=u_i(x_1, \dots  , x_k), i=1, \dots , r \rangle.
\end{equation}They proved that if $G$ is finite then system~(\ref{eq2}) is stable in permutations, and if $G$ is sofic but not residually finite then system~(\ref{eq2})  is unstable in permutations. One question that remains open is to determine if for any finitely presented residually finite group $G$, with presentation (\ref{group1}), system~(\ref{eq2})  is stable or not in permutations. As the group  $G=\langle x, y\;|\; xy=yx\rangle$ is an example of a residually finite group we are interested in the following:
 \begin{problem}\label{problem1}
To determine if equation $xy=yx$ is or not stable in permutations.
   \end{problem}
In an informal way, this problem can be expressed as the following  ``almost" implies ``near" type problem (see, e.g.,  \cite{ander} for more problems of this type):   is it true that any pair of almost commuting permutations is closed to a commuting pair of permutations?  A problem, which is closely related to problem~\ref{problem1} was studied by D. Gorenstein, et al., \cite{gor}.

The analogous problem about the stability of $xy=yx$ in matrices is a classical problem in linear algebra and operator theory, and has been widely studied for the cases when the distance between matrices are: the operator norm (see, e.g.,  \cite{friIs, has, hlin, osborne, voicu}),  the Schatten norm (see, e.g., \cite{filo2}), and for the normalized Hilbert-Schmidt distance (see, e.g., \cite{filo, gleb, had2, had3}). On the other hand, the case of rank distance is poorly studied and little understood (see, e.g., \cite[Sec. 2.3]{glebriv2}). 

In order to get insight and to develop tools towards a solution  of Problem~\ref{problem1}, we begin the study of the following problems: let $\a$ and $\b$ be two permutations, we say that $\a$ and $\b$ {\it $k$-commute} if $H(\a\b, \b\a)=k$.

\begin{problem}\label{problem2}
 For given $\b \in S_n$, to characterize the permutations $\a$ that $k$-commute with $\b$.
 \end{problem}
 \begin{problem}\label{problem3}
  To compute the number $c(k, \b)$ of permutations that $k$-commute with $\b$, where $\b$ is any permutation and $k$ any nonnegative integer.
  \end{problem}
Our main result about Problem~\ref{problem2}  is a characterization of permutations $\a$ that $k$-commute with a given permutation $\b$. This characterization is given in terms of blocks formed by strings of consecutive points in the cycles of the decomposition of $\b$ as a product of disjoint cycles. 
With respect to Problem~\ref{problem3}, using our characterization we were able to find explicit formulas for $c(k,\beta)$, for any $\b$ and $k\leq 4$. The study of this small cases sheds light of how difficult it can be the problem of computing $c(k, \b)$ in its generality. So we worked with several specific types of permutations. We have found some relations between $c(k, \b)$ and the following integer sequences in OEIS \cite{oeis}: A208529, A208528 and A098916 when $\b$ is a transposition, A000757 when $\b$ is an $n$-cycle, and A053871 when $\b$ is a fixed-point free involution.  The relationship between the number $c(k, \b)$ with some integers sequences in OEIS have provided another motivation for the authors to studied permutations that $k$-commute using the Hamming metric. The interested reader in a similar problem but with strings is referred to the work of  J. Shallit~\cite{js}. 

The outline of the paper is as it follows. In Section~\ref{sec:preliminar} we give some of the definitions and notation  used throughout the paper. In Section~\ref{perthatkcommute} we present our characterization of permutations that $k$-commute with a given permutation $\b$. This characterization is given in terms of blocks in cycles in the decomposition of $\b$ as a product of disjoint cycles. In Section~\ref{numberckb} we present a formula and a bivariate generating function for the number of permutations that $k$-commute with any $n$-cycle. Also we present a result about the proportion of even permutations that $k$-commute with $\b$. In Section~\ref{ckb34} we obtain explicit formulas for the number $c(k, \b)$ when $\b$ is any permutation and $k=3, 4$. In Section~\ref{involutions} we obtain some formulas for the cases when $\b$ is a transposition and a fixed-point free involution. 

\section{Definitions and notation}\label{sec:preliminar}
We first give some definitions and notation used throughout the work. The elements in $[n]$ are called {\it points} and the elements in $S_n$ are called permutations or $n$-permutations. For any permutation $\pi \in S_n$, we write $\pi =p_1 p_2 \dots p_n $ for its one-line notation, i.e., $\pi(i)=p_i$ for every $i \in [n]$. We compute the product $\a\b$ of permutations $\a$ and $\b$ by first applying $\b$ and then $\a$.  
 A permutation $\pi \in S_n$ is called a {\it cycle} of length $m$ (or $m$-{\it cycle}), and it is denoted by $\pi=(a_1 a_2 \dots a_m)$, if $\pi(a_i)=a_{i+1}$, for $1\leq i < m$, $\pi(a_m)=a_1$ and $\pi(a)=a$ for every $a \in [n] \setminus \{a_1, \dots , a_m\}$.   
It is a known fact that any permutation can be written in essentially one way as a product of disjoint cycles (called its {\it cycle decomposition}, see, e.g., \cite[Sec.\ 1.3, p.\ 29]{dum}). In this paper, we will denote a cycle in the disjoint cycle decomposition of $\pi$ by $\pi_j$, i.e., $\pi$ with a  subindex $j \in [n]$ that not necessarily means its length, and we will say that $\pi$ {\it has cycle} $\pi_j$ or that $\pi_j$ is {\it a cycle of} $\pi$.  If $\pi_j=(a_1 \dots  a_m)$ is a cycle of $\pi$ we define $\set(\pi_j):=\{a_1, \dots , a_m\}$, and we say that $a$ is a point in cycle $\pi_j$ if $a \in \set(\pi_j)$. The {\it cycle type} of a permutation $\b$ is a vector $(c_1, \dots , c_n)$ that indicates that $\b$ has exactly $c_i$ cycles of length $i$ in its cycle decomposition.  
The {\it Hamming metric}, $H(\a, \b)$, between permutations $\a$ and $\b$  is defined as $H(\a, \b)=|\{ a \in [n] \; : \; \a(a) \neq \b(a)\}|$.
 It is well-known (see, e.g.,  \cite{dezahuang}) that this metric is bi-invariant, that not two permutations have Hamming metric equal to 1, and also, that $H(\a, \b)=2$ if and only if $\a\b^{-1}$ is a transposition. We say that  $a\in[n]$ is a {\it good commuting point}  (resp. {\it bad commuting point}) of $\a$ and $\b$ if $\a\b(a)= \b\a(a)$ (resp. $\a\b(a)\neq \b\a(a)$). 
Usually, we abbreviate good commuting points (resp. bad commuting points) with {\it g.c.p.} (resp. {\it b.c.p.}). In this work, we use the convention $m \bmod m = m$  for any  positive integer $m$.

\subsection{Blocks in cycles}
Let $\pi \in S_n$, a {\it block} $A$ in a cycle $\pi_j=(a_1a_2\dots a_m)$ of $\pi$ is a nonempty string $A=a_ia_{i+1} \dots  a_{i+l}$, where $l \leq m$, of consecutive elements  in $\pi_j$, where the sums on the subindex are taken modulo $m$. This definition is different from the given in \cite{gor}, our way of defining a block in a cycle is similar to the definition of block when permutation is written in one-line-notation (see, e.g., (\cite{bona3, chris}). The {\it length} of a block $A$ is the number of elements in the block, and is denoted by $|A|$. If we have the block $A=a_1 \dots  a_l$, the elements $a_1$ and $a_l$ are called the {\it first} and the {\it last} elements of the block, respectively. A {\it proper block} (resp. {\it improper block}) of an $m$-cycle is a block of length $l < m$ (resp. $l=m$).  
 Two blocks $A$ and $B$ are said to be {\it disjoint} if they do not have points in common. 
The {\it product} $AB$ of two disjoint blocks, $A$ and $B$, not necessarily from the same cycle, is defined by concatenation of strings (this product is not necessarily a block in a cycle of $\b$). If $\pi_j$ is a cycle, we sometimes write $\pi_j=(A_1\dots A_k)$ to mean that one of the $m$ cyclic equivalent ways to write $\t$ is equal (as a block) to $A_1\dots A_k$. A {\it block partition} of a cycle $\pi_j$ is a set $\{A_1, \dots, A_l\}$ of pairwise disjoint blocks such that there exist a product $A_{i_1}\dots  A_{i_l}$ of these blocks such that $\pi_j=(A_{i_1}\dots  A_{i_l})$.
If $P= J_{1}J_{2}\dots  J_{k}$ is a block product of $k$ pairwise disjoint blocks (not necessarily from the same cycle) and $\a$ is a permutation in $S_k$, the {\it block permutation} $\phi_\a$ induced by $\a$ and $P$ is defined as $\phi_\a(P)=J_{\a(1)}J_{\a(2)}\dots  J_{\a(k)}$.  
\begin{example}
Let $\pi=\pi_1\pi_2 \in S_9$ with $\pi_1=(1234), \pi_2=(56789)$. An improper block in $\pi_1$ is $2341$ and a proper block is $A=12$. The blocks  $B_1=567$, $B_2=8$, $B_3=9$ are a block partition of $\pi_2$. The product $B_1B_2$ is a block in $\pi_2$ and $AB_2=128$ is not a block in any cycle of $\pi$. Let $\a=(321) \in S_3$. The block permutation $\phi_\a(B_1B_2B_3)$ is $B_3B_1B_2=95678$.  
\end{example} 

Let $\a, \b \in S_n$ and $\b_j=(b_1\dots  b_m)$ a cycle of $\b$. It is well known (see, e.g., \cite[Prop.\ 10, p.\ 125]{dum}) that $\a \b_j \a^{-1}=(\a(b_1) \dots \a(b_m))$, i.e., $\a\b_j\a^{-1}$ is also an $m$-cycle, not necessarily of $\b$. Sometimes we  write $\a|_{\set(\b_j)}$, the restriction of $\a$ to $\set(\b_j)$, as
\begin{equation}\label{alfares}
\a|_{\set(\b_j)}=\left(
\begin{array}{ccccccccc}
 b_1 & b_2 & \dots   & b_m \\
\a(b_1) & \a(b_2) & \dots   & \a(b_m)
 \end{array}
\right).
\end{equation}

If $\a|_{\set(\b_j)}$ is written as in (\ref{alfares}), we will write 
\[
\a|_{\set(\b_j),  k}=\left(\begin{array}{cccc}
B_1 B_2  \dots   B_k   \\
 J_1  J_2   \dots   J_k  
 \end{array} \right),
\]  
to mean that if $B_1   \dots   B_k=b_1\dots b_m$, then $J_1, \dots , J_k$ are blocks in cycles of $\b$, with $J_1 \dots  J_k=\a(b_1)\a(b_2) \dots  \a(b_m)$, and $|B_i|=|J_i|$, for $1\leq i \leq k$ (notice that there are $m$ possibilities for the first point $b_1$ in the first row). We refer to this notation as  the {\it block notation} (with respect to $\b$) of $\a|_{\set(\b_j)}$. If not required the subindex $k$ in  $\a|_{\set(\b_j),  k}$ it will be omitted. 
\begin{example} Let $\a, \b \in S_6$ with $\a=(1\;3\;4)(2\;5\;6)$ and $\b=(1 \; 2\;4\;5)(3\;6)$. If $\b_j=(1 \; 2\;4\;5)$, two ways to express $\a|_{\set(\b_j)}$ in block notation are
\[
\a|_{\set(\b_j), 3}=\left(
\begin{array}{|c|cc|c|ccccc}
1 & 2 & 4  & 5 \\
3 & 5 & 1  & 6
 \end{array}
\right), \hspace{0.3cm}
\a|_{\set(\b_j), 4}=\left(
\begin{array}{|c|c|c|c|ccccc}
1 & 2 & 4  & 5 \\
3 & 5 & 1  & 6
 \end{array}
\right),
\]
where the vertical lines denotes the limits of the blocks.
\end{example}
\section{Permutations that $k$-commute with a cycle of a permutation}\label{perthatkcommute}
In this section we show the relation between blocks in cycles of $\b$ and the permutations $\a$ that $k$-commutes with $\b$. First we prove the following.  
\begin{proposition}\label{12}
Let $\b$ be any permutation of cycle type $(c_1, \dots , c_n)$. Then $c(0, \b)=\prod_{i=1}^{n}i^{c_i}c_i!$, and $c(1,\b)=c(2,\b)=0$.  
\end{proposition}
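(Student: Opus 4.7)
The plan is to handle the three claims separately, leveraging the facts already recorded in the preliminaries so that each piece reduces to a one-line observation.

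For $c(0,\b)$, note that $H(\a\b,\b\a)=0$ means $\a\b=\b\a$, so $c(0,\b)$ is exactly the order of the centralizer $C_{S_n}(\b)$. I would invoke the classical formula for the size of the centralizer of a permutation in terms of its cycle type (see, e.g., the standard orbit-stabilizer computation applied to the conjugation action on $S_n$): since two permutations are conjugate iff they share a cycle type, the conjugacy class of $\b$ has size $n!/\prod_i i^{c_i}c_i!$, so $|C_{S_n}(\b)|=\prod_{i=1}^n i^{c_i}c_i!$. This immediately yields the formula for $c(0,\b)$.

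For $c(1,\b)=0$, I would quote the fact stated in Section~\ref{sec:preliminar} that no two distinct permutations are at Hamming distance $1$: indeed, if $\sigma,\tau\in S_n$ agree on $n-1$ points, then the two images they assign to the remaining point must both equal the unique element of $[n]$ missing from the common part, so $\sigma=\tau$. Applying this to $\sigma=\a\b$ and $\tau=\b\a$ rules out $H(\a\b,\b\a)=1$ for every $\a$, giving $c(1,\b)=0$.

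For $c(2,\b)=0$, I would combine two already-cited facts: $H(\sigma,\tau)=2$ iff $\sigma\tau^{-1}$ is a transposition; and a transposition is an odd permutation. Setting $\sigma=\a\b$ and $\tau=\b\a$, one gets $\sigma\tau^{-1}=\a\b\a^{-1}\b^{-1}$, the commutator $[\a,\b]$. Every commutator lies in the alternating group $A_n$ (being a product of two conjugate permutations $\a\b$ and $(\b\a)^{-1}=\a^{-1}\b^{-1}$, which have equal sign), so $[\a,\b]$ is even and therefore cannot be a transposition. Hence $H(\a\b,\b\a)\neq 2$ for all $\a$.

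There is no real obstacle here: each item collapses to a single invocation of a preliminary fact (the centralizer formula, the nonexistence of Hamming-distance-$1$ pairs, and the characterization of Hamming-distance-$2$ pairs via transpositions together with the parity of commutators). The only mild care required is the parity argument in the third part, where it is important to phrase things so that one truly sees $\a\b\a^{-1}\b^{-1}\in A_n$ rather than accidentally producing something odd.
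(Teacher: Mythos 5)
Your proof is correct and takes essentially the same route as the paper: identify $c(0,\b)$ with the order of the centralizer, invoke the fact that no two permutations are at Hamming distance $1$, and for $k=2$ observe that $\a\b\a^{-1}\b^{-1}$ would have to be a transposition yet is an even permutation. You merely flesh out details the paper leaves implicit (the orbit--stabilizer derivation of $|C_{S_n}(\b)|$ and the parity of the commutator).
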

\begin{proof}
When $k=0$, $c(0,\b)$ is the size of the centralizer of $\b$. As no two permutations have Hamming metric equal to $1$ then $c(1, \b)=0$. Now we show case $k=2$. It is easy to see that $H(\pi, \tau)=2$ if and only if $\pi\tau^{-1}$ is a transposition. If $H(\alpha \beta, \beta \alpha) = 2$ then $\alpha \beta\alpha^{-1}\beta^{-1}$ should be a transposition, but this lead to a contradiction because $\alpha \beta \alpha^{-1}\beta^{-1} $ is an even permutation.  
\end{proof}

If two permutations $\a$ and $\b$ commute, then $\a \b \a^{-1}=\b$, and we can think that ``$\alpha$ rearranges the cycles of $\b$". We will say that $\a$ {\it transforms} the cycle $\b_j$ of $\b$ into the cycle $\a \b_j\a^{-1}$ if $\a \b_j\a^{-1}$ is also a cycle in the cycle decomposition of $\b$. Let $B=b_1 \dots  b_l$ be a block in   $\b_j$, we say that $\a$ {\it commutes} with $\b$ {\it on the block} $B$ if $\a\b(b_i)=\b\a(b_i)$, for every $i=1, \dots, l$, and that {\it commutes} (resp. do not commute) with $\b$ on $\b_j$, or simply (by abusing of notation) that $\a$ commutes (resp. do not commute) with $\b_j$, if $\a\b(b)=\b\a(b)$ for every $b \in \set(\b_j)$ (resp. $\a\b(b) \neq \b\a(b)$ for some $b \in \set(\b_j)$). The following remark is implicitly used in some of the proofs in this article.
\begin{remark}\label{betaonblock}
Let $\a, \b \in S_n$ and $\{a_1,\dots, a_l\} \subseteq [n]$. If  $\a(a_1) \dots  \a(a_l)$ is a block in a cycle of $\b$ then, by definition of block, it follows that $\b(\a(a_i))=\a(a_{i+1})$,  $1\leq i \leq l-1$.
\end{remark}

The following result is the key to relate commutation and blocks in cycles. 
\begin{proposition}\label{pot}
Let $\a, \b \in S_n$. Let $\ell, m$ be integers, $1 \leq \ell < m \leq n$. Let $\b_j=(b_1 \; \dots  \; b_m)$ be a cycle of $\b$. If $\a$ commutes with $\b$ on the block $b_1 \dots  b_\ell$, then $\a(b_1)\dots  \a(b_\ell)\a(b_{\ell+1})$ is a block in a cycle of $\b$.
\end{proposition}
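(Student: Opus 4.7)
The plan is to translate the commutation condition into a recurrence relating the images $\a(b_i)$ under $\b$, and then observe that this recurrence is exactly what the definition of a block requires.

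More precisely, I would first unwind what it means for $\a$ to commute with $\b$ on the block $b_1 \dots b_\ell$: for each $i \in \{1, \dots, \ell\}$ we have $\a\b(b_i)=\b\a(b_i)$. Since $\b_j=(b_1 \dots b_m)$ and $\ell < m$, we have $\b(b_i)=b_{i+1}$ for $1\le i\le \ell$ (here $b_{\ell+1}$ is well-defined as a point of $\b_j$). Therefore the commutation equations become
\[
\a(b_{i+1}) = \b\bigl(\a(b_i)\bigr), \qquad i=1,\dots,\ell.
\]

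Next I would read off what this recurrence says geometrically. Setting $c_i:=\a(b_i)$ for $i=1,\dots,\ell+1$, the display above is exactly $c_{i+1}=\b(c_i)$ for $i=1,\dots,\ell$. Hence $c_2$ lies in the same $\b$-cycle as $c_1$, $c_3$ in the same cycle as $c_2$, and so on; by transitivity all of $c_1,\dots,c_{\ell+1}$ lie in a single cycle $\b_{j'}$ of $\b$. Moreover, since $\a$ is a bijection and $b_1,\dots,b_{\ell+1}$ are $\ell+1$ distinct points of $\b_j$, the images $c_1,\dots,c_{\ell+1}$ are $\ell+1$ distinct points of $\b_{j'}$, satisfying $c_{i+1}=\b(c_i)$. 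This is precisely the definition of a (proper, since $\ell+1\le m$ and $\b_{j'}$ has at least $\ell+1$ points) block of length $\ell+1$ in the cycle $\b_{j'}$, as recorded in Remark~\ref{betaonblock}.

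I do not expect any real obstacle: the statement is essentially a reformulation of the commutation identity once one recognizes that $\a\b(b_i)=\b\a(b_i)$ is the same as saying $\b$ sends $\a(b_i)$ to $\a(b_{i+1})$. The only thing worth being careful about is the requirement $\ell<m$, which guarantees $b_{\ell+1}$ is a legitimate point of $\b_j$ distinct from $b_1,\dots,b_\ell$, so that the extension of the block by one position makes sense; this is exactly the hypothesis given.
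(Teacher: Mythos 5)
Your proof is correct and takes essentially the same route as the paper's: both unwind $\a\b(b_i)=\b\a(b_i)$ into the recurrence $\a(b_{i+1})=\b(\a(b_i))$ (the paper packages this as an induction showing $\a(b_i)=\b^{i-1}(\a(b_1))$) and conclude that the images form a string of consecutive points in a single cycle of $\b$. One immaterial slip: your parenthetical claim that the resulting block is \emph{proper} is not justified---if the target cycle has length exactly $\ell+1$ the block would be improper---but the proposition only asserts that it is a block, so this does not affect the argument.
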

\begin{proof}
It is enough to prove that $\a(b_{i})=\b^{i-1}(\a(b_1))$, for $i=1, \dots , \ell+1$. The proof is by induction on $i \leq \ell+1$. The base case $i=1$ is trivial.  Assume as inductive hypothesis that the statement is true for every $k <\ell+1$. As $\a$ and $\b$ commute on $b_k$, we have that $\a(b_{k+1})=\a(\b(b_{k}))=\b(\a(b_{k}))=\b(\b^{k-1}(\a(b_1)))=\b^{k}(\a(b_1))$.
\end{proof}

We have the following result.
\begin{proposition}\label{transform}
Let $\b_j$ be an $m$-cycle of $\b$. Then  $\a$ commutes with $\b$ on $\b_j$ if and only if $\a$ transforms $\b_j$ into an $m$-cycle of $\b$. 
\end{proposition}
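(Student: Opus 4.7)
The plan is to convert both sides of the equivalence into elementwise equations and observe that they coincide. Write $\beta_j = (b_1\,b_2\,\dots\,b_m)$, and recall from the quoted fact about conjugation that
\[
\a\b_j\a^{-1} = (\a(b_1)\,\a(b_2)\,\dots\,\a(b_m)),
\]
which is automatically an $m$-cycle of $S_n$. The content of the statement is therefore whether this $m$-cycle actually appears in the disjoint cycle decomposition of $\b$.

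For the forward direction, assume $\a$ commutes with $\b$ on $\b_j$, i.e.\ $\a\b(b_i)=\b\a(b_i)$ for $i=1,\dots,m$. Since $\b(b_i)=b_{i+1}$ with indices taken modulo $m$, this rewrites as $\a(b_{i+1}) = \b(\a(b_i))$ for every $i$. Iterating from $i=1$, one obtains $\a(b_{i+1})=\b^{i}(\a(b_1))$ for $i=0,1,\dots,m-1$, and closing the cycle gives $\b^{m}(\a(b_1))=\a(b_1)$ with all intermediate iterates distinct (since $\a$ is a bijection and the $b_i$ are distinct). Hence the $\b$-orbit of $\a(b_1)$ is $\{\a(b_1),\dots,\a(b_m)\}$ and the cycle of $\b$ containing $\a(b_1)$ is precisely $(\a(b_1)\,\dots\,\a(b_m)) = \a\b_j\a^{-1}$, so $\a$ transforms $\b_j$ into an $m$-cycle of $\b$. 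Alternatively, one may simply invoke Proposition~\ref{pot} with $\ell=m-1$ to obtain that $\a(b_1)\dots\a(b_m)$ is a block in a cycle of $\b$, and then use the extra commutation at $b_m$, namely $\b(\a(b_m))=\a(b_1)$, to conclude that this block is an entire cycle.

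For the converse, assume $\a\b_j\a^{-1}=(\a(b_1)\,\dots\,\a(b_m))$ is an $m$-cycle in the cycle decomposition of $\b$. By the definition of a cycle, $\b(\a(b_i))=\a(b_{i+1})$ for $i=1,\dots,m$ (indices mod $m$). On the other hand, $\a(\b(b_i))=\a(b_{i+1})$ as well, so $\a\b(b_i)=\b\a(b_i)$ for every $b_i \in \set(\b_j)$, which is the definition of $\a$ commuting with $\b$ on $\b_j$.

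I do not expect a serious obstacle: the proof is essentially unpacking two equivalent ways of describing when a would-be cycle is actually a cycle of $\b$. The only point requiring a touch of care is the closure condition at $b_m$ in the forward direction, but this is immediate from the commutation hypothesis at the last point of $\b_j$.
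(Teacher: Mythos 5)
Your proof is correct and follows essentially the same route as the paper: the forward direction is Proposition~\ref{pot} (which you also rederive directly as an orbit computation) plus the closure condition $\b(\a(b_m))=\a(\b(b_m))=\a(b_1)$, and the converse unpacks the cycle condition $\b(\a(b_i))=\a(b_{i+1\bmod m})$ exactly as in Remark~\ref{betaonblock}. No gaps.
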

\begin{proof}
If $\a$ commutes with $\b$ on $\b_j=(b_1 \dots  b_m)$ then from Proposition~\ref{pot} it follows that $\a(b_1) \dots  \a(b_{m-1})\a(b_{m})$ is a block in a cycle  of $\b$. Now, as $\b(\a(b_{m}))=\a(\b(b_{m}))=\a(b_1)$ then $\a(b_1) \dots  \a(b_{m-1})\a(b_{m})$ is an improper block in an $m$-cycle, say $\b_l$, of $\b$, i.e., $\a$ transforms $\b_j$ into $\b_l$. Conversely, if $\a$ transform $\b_j$ into an $m$-cycle, say $\b_l$, of $\b$ then $\b_l=\a \b_j \a^{-1}$ that is equal to $(\a(b_1) \dots  \a(b_{m}))$. Then $\a(b_1) \dots   \a(b_{m})$ is a block in a cycle of $\b$ which implies (see Remark~\ref{betaonblock}) that $\b(\a(b_i))=\a(b_{i+1 \bmod m})$. As also $\a(b_{i+1 \bmod m})=\a(\b(b_i))$ then $\b(\a(b_i))=\a(\b(b_i))$, for every $i \in \{1, \dots , m\}$.  
\end{proof}
\begin{corollary}
Let $\a, \b \in S_n$. Then $\a$ and $\b$ commute if and only if $\a$ transforms all the cycles of $\b$ into cycles of $\b$. 
\end{corollary}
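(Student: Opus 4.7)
The plan is to derive the corollary as an immediate consequence of Proposition~\ref{transform}, applied cycle by cycle to the full disjoint cycle decomposition of $\b$. The key underlying fact is that the sets $\set(\b_j)$, as $\b_j$ ranges over the cycles of $\b$ (including $1$-cycles for fixed points), partition $[n]$; hence $\a\b=\b\a$ as permutations of $[n]$ is equivalent to $\a\b(b)=\b\a(b)$ for every $b \in \set(\b_j)$ and every cycle $\b_j$, i.e., to $\a$ commuting with $\b$ on $\b_j$ for each $j$ in the sense defined before Proposition~\ref{transform}.

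For the forward direction, I would assume $\a\b=\b\a$. Then for any cycle $\b_j$ of $\b$ we have $\a\b(b)=\b\a(b)$ for every $b \in \set(\b_j)$, so by definition $\a$ commutes with $\b$ on $\b_j$. Proposition~\ref{transform} then gives that $\a$ transforms $\b_j$ into a cycle of $\b$ (of the same length). Since this holds for every $j$, $\a$ transforms all cycles of $\b$ into cycles of $\b$. For the backward direction, assume $\a$ transforms every cycle of $\b$ into a cycle of $\b$. By Proposition~\ref{transform}, $\a$ commutes with $\b$ on every cycle $\b_j$, that is, $\a\b(b)=\b\a(b)$ for every $b \in \set(\b_j)$. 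Taking the union over all $j$ and using that these sets cover $[n]$, we obtain $\a\b=\b\a$.

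There is essentially no obstacle here: the corollary is a direct packaging of Proposition~\ref{transform} over all cycles simultaneously, with the only additional input being the disjoint cycle decomposition of $\b$ partitioning $[n]$. The proof should fit in a few short lines.
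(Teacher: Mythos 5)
Your proof is correct and follows exactly the route the paper intends: the corollary is stated without proof as an immediate consequence of Proposition~\ref{transform}, applied to each cycle of $\b$ and using that the cycle sets partition $[n]$. Nothing is missing.
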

\begin{remark} \label{blockcommute}
Using block notation, Proposition~\ref{transform} can be rewritten as follows: a permutation $\a$ commutes with $\b$ on $\b_j=(B)$  if and only if
\[
\a|_{\set(\b_j), 1}=\left(\begin{array}{cccc}
B  \\
 B' 
 \end{array} \right),
\]  
where $\b_{l}=(B')$ is a cycle of $\b$.
\end{remark}

 Let $\b_i$ be a cycle of $\b$. We say that $\a$ {\it $(k_i, \b)$-commutes} with $\b_i$ if there exists exactly $k_i$ points in $\b_i$ on which $\a$ and $\b$ do not commute. 
\begin{remark}
Let $\beta$ be a permutation. Then $\alpha$ $k$-commutes with $\beta$ if and only if there exists $h$ cycles, say $\b_1,  \dots , \b_h$, of $\beta$ such that for every $1  \leq  i  \leq \ h$, $\alpha$ $(k_i, \b)$-commutes with $\b_i$, $k_i \geq 1$, where  $k_1+\dots +k_h = k$ and  $\alpha$ commutes with $\b$ on every cycle not in $\lbrace \b_1, \dots , \b_h \rbrace$.
\end{remark}

Now we present one of our main results.

\begin{theorem}\label{1cycleblocks}
Let $\b_j$ be an $m$-cycle of $\b$ and $k\geq1$. Then $\a$ $(k, \b)$-commutes with $\b_j$ if and only if $\a \b_j\a\inv=(P_1\dots  P_k)$,  where the blocks $P_1, \dots , P_k$ satisfy the following
\begin{enumerate}
\item if $k=1$ then $P_1$ is a proper block in a cycle of $\b$,
\item if $k >1$ then $P_1, \dots , P_k$  are $k$ pairwise disjoint blocks, from one or more cycles of $\b$, such that for any $i \in [k]$, the string $P_iP_{i+1\bmod k }$ is not a block in any cycle of $\b$. 
\end{enumerate}
\end{theorem}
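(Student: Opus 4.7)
The plan is to prove both directions of the equivalence, using Proposition~\ref{pot} (to turn commuting runs into blocks of $\b$) and Proposition~\ref{transform} (to handle the borderline case $k=1$). The key conceptual idea is that the bad commuting points on $\b_j$ act as ``cut points'': cutting $\b_j$ immediately after each b.c.p.\ produces exactly $k$ arcs $B_1,\dots,B_k$ on each of which $\a$ commutes with $\b$ except at the final point, and the images $P_i:=\a(B_i)$ are the desired blocks.

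For the direction ($\Rightarrow$), I would list the b.c.p.\ on $\b_j=(b_1\dots b_m)$ in cyclic order as $c_1,\dots,c_k$, and partition the cycle as $\b_j=(B_1\dots B_k)$ where each $B_s$ starts right after $c_{s-1}$ and ends at $c_s$. By construction, within each $B_s$ the permutation $\a$ commutes with $\b$ at every point except possibly the last, so Proposition~\ref{pot} applied to the initial segment of $B_s$ (possibly trivial) together with the last commuting step guarantees that $P_s:=\a(B_s)$ is a block in a cycle of $\b$. Since $\a\b_j\a^{-1}=(\a(b_1)\dots\a(b_m))$, we get $\a\b_j\a\inv=(P_1\dots P_k)$, and the $P_i$ are pairwise disjoint because $\a$ is a bijection and the $B_i$ are disjoint. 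If $k>1$ and $P_sP_{s+1\bmod k}$ were a block in some cycle of $\b$, then $\b(\a(c_s))$ would equal the first point of $P_{s+1}$, which is $\a(\b(c_s))$; this contradicts $c_s$ being a b.c.p. If $k=1$ and $P_1$ were improper, then $\a\b_j\a\inv$ would be a cycle of $\b$, and Proposition~\ref{transform} would force $\a$ to commute with $\b$ on all of $\b_j$, contradicting the existence of the b.c.p.~$c_1$.

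For the direction ($\Leftarrow$), given $\a\b_j\a\inv=(P_1\dots P_k)$ with the stated properties, I would pull the partition back via $\a^{-1}$ to write $\b_j=(B_1\dots B_k)$ with $\a(B_s)=P_s$, and check that the only b.c.p.\ in each $B_s$ is its last point. For any non-last point $b\in B_s$, the successor $\b(b)$ lies in $B_s$, and since $P_s$ is a block in a cycle of $\b$ the element $\a(\b(b))$ is precisely $\b(\a(b))$ (using Remark~\ref{betaonblock}), so $b$ is a g.c.p. For the last point $c_s$ of $B_s$, we have $\a(\b(c_s))$ equal to the first point of $P_{s+1\bmod k}$, while $\b(\a(c_s))$ is determined by where the last point of $P_s$ sits in its own cycle of $\b$; the hypothesis on the $P_i$ guarantees these are different (when $k>1$, directly from $P_sP_{s+1\bmod k}$ not being a block, and when $k=1$, from $P_1$ being a proper block, so $\b$ sends its last point outside $P_1$). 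This yields exactly $k$ b.c.p.\ on $\b_j$, which is the definition of $\a$ $(k,\b)$-commuting with $\b_j$. The main technical point to be careful about is the case $k>1$ where some $P_s$ may be an \emph{improper} block of a shorter cycle of $\b$; here $\b(\a(c_s))$ cycles back to the first point of $P_s$, and disjointness of the $P_i$ is what prevents this from coinciding with the first point of $P_{s+1\bmod k}$.
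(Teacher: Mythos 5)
Your proposal is correct and follows essentially the same route as the paper: cut $\b_j$ at the bad commuting points, apply Proposition~\ref{pot} to see that each image arc $P_i$ is a block of $\b$, and detect the b.c.p.\ at each junction via the dichotomy of whether $P_iP_{i+1\bmod k}$ forms a block. Your explicit treatment of the case where some $P_s$ is an improper block of a shorter cycle (resolved by disjointness of the $P_i$) is a point the paper's proof passes over more quickly, but it is the same argument in substance.
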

\begin{proof}
(1)  $( \Leftarrow)$: Suppose that $\a\b_j\a^{-1}=(P_1)$ and without lost of generality we can take $\b_j=(b_1  \dots   b_m)$ such that  $P_1=\a(b_1) \dots  \a(b_m)$. As, by hypothesis, $P_1$ is a proper block in a cycle of $\b$ then $\b(\a(b_i))=\a(b_{i+1})$, for $i=1, \dots, m-1$ (Remark~\ref{betaonblock}). As $\a(b_{i+1})=\a(\b(b_i))$, we have that $\a$ and $\b$ commute on $b_i$, for every $1\leq i \leq m-1$. Now we prove that $\b(\a(b_m)) \neq \a(\b(b_m))$ by contradiction. Suppose that $\b(\a(b_m)) = \a(\b(b_m))$,  then $\b(\a(b_m))=\a(b_1)$ which implies that $P_1$ is an improper block in a cycle of $\b$, a contradiction. 
 
$( \Rightarrow)$: Without lost of generality we can assume that $\a$ commutes with $\b$ on block $b_1 \dots   b_{m-1}$ of $\b_j=(b_1  \dots b_{m-1}b_m)$ and that does not commute on $b_m$. By Proposition~\ref{pot}, $\a(b_1) \dots  \a(b_{m-1})\a(b_m)$ is a block in a cycle of $\b$ and is a proper block due to Proposition~\ref{transform} (see also Remark~\ref{blockcommute}).

(2) 
$( \Leftarrow)$:  Suppose that $\a \b_j \a\inv=(P_1 \dots  P_k)$, where every $P_i=p_{i1}p_{i2} \dots  p_{i\ell_i}$ is a block in a cycle of $\b$ and  that for every $i \in [k]$, $P_iP_{i+1}$ is not a block in any cycle of $\b$. Without lost of generality we assume that $\b_j=(b_{11} \dots b_{1\ell_1} b_{21} \dots b_{2\ell_2}  \dots  b_{k1} \dots    b_{k\ell_k})$ such that $\a(b_{ir})=p_{ir}$, for every $i \in [k]$ and $1\leq r \leq \ell_i$.
 
As $P_i$ is a block in a cycle of $\b$ then $\b(p_{ir})=p_{i(r+1)}$, for every $1\leq r < \ell_i$ (Remark~\ref{betaonblock}), then we have for one side that $p_{i(r+1)}=\b(p_{ir})=\b\big(\a(b_{ir})\big)$ and for the other side $p_{i(r+1)}=\a(b_{i(r+1)})=\a\big(\b(b_{ir})\big)$ and hence $\a$ and $\b$ commute on $b_{ir}$, for every $1\leq r < \ell_i$. Now we prove that $\b\big(\a(b_{i\ell_i})\big)\neq \a\big(\b(b_{i\ell_i})\big)$ by contradiction. Suppose that $\b\big(\a(b_{i\ell_i})\big)=\a\big(\b(b_{i\ell_i})\big)$, as  
$\b(p_{i\ell_i})=\b\big(\a(b_{i\ell_i})\big)$, then $\b(p_{i\ell_i})=\a\big(\b(b_{i\ell_i})\big)=\a(b_{(i+1 \bmod k)1})=p_{(i+1 \bmod k)1}$, which implies that  $P_iP_{i+1 \bmod k}$ is a block in a cycle of $\b$, a contradiction.

$( \Rightarrow)$: If $\a$ does not commute with $\b$  on exactly $k$ points in $\b_j$ we write $\b_j=(B_1 \dots  B_k)$, where for every block $B_i=b_{i1}b_{i2} \dots  b_{i\ell_i}$, $\a$ and $\b$ commute on $b_{ij}$, for $1\leq j <\ell_i$, and does not commute on $b_{i\ell_i}$.  By Proposition~\ref{pot} we have that $P_i:=\a(b_{i1})\a(b_{i2}) \dots  \a(b_{i\ell_i})$ is a block in a cycle of $\b$. Now suppose that for some $i$, $P_iP_{i+1 \bmod k}$ is a block in any cycle of $\b$, then $\b\big(\a(b_{i\ell_i})\big)=\a(b_{(i+1 \bmod k)1})=\a\big(\b(b_{i\ell_i})\big)$,  contradicting the assumption that $\a$ and $\b$ do not commute on $b_{i\ell_i}$.

\end{proof}

\begin{remark}\label{kcommuteblock}
$\a|_{\set(\b_j), k}$ in previous proposition, can be written as
\[
\a|_{\set(\b_j), k}=\left(\begin{array}{cccc}
B_1  \dots   B_k   \\
 P_1    \dots   P_k  
 \end{array} \right),
\]  
with $\b_j=(B_1  \dots   B_k)$, $\a\b_j\a^{-1}=(P_1 \dots  P_k)$, where for every $1\leq i \leq k$, $|B_i|=|P_i|=\ell_i$, $P_i=p_{i1}p_{i2} \dots  p_{i\ell_i}$, $B_i=b_{i1}b_{i2} \dots  b_{i\ell_i}$, i.e., $\a(b_{ir})=p_{ir}$, $1\leq r\leq \ell_i$, and where $\a$ and $\b$ commute on   $b_{i1}b_{i2} \dots  b_{i(\ell_i-1)}$ and do not commute on $b_{i\ell_i}=\a^{-1}(p_{i\ell_i})$ (the last point of $B_i$).  
\end{remark}

Using Theorem~\ref{1cycleblocks} we can characterize permutations that $k$-commute with $\b$ in terms of blocks in cycles of $\b$ as follows.
\begin{corollary}
Let $\a, \b \in S_n$. Then $\a$ $k$-commutes with $\b$ if and only if there exist $h$ cycles of $\b$, say $\b_1, \dots , \b_h$, such that $\a$ commutes with $\b$ on each cycle not in $\{\b_1, \dots, \b_h\}$ and for every $i \in \{1, \dots , h\}$, 
$\a \b_i\a\inv=(P_1^{(i)}\dots  P^{(i)}_{k_i})$, with $k_i\geq 1$,  $k=k_1+\dots  +k_h$, and where the blocks $P_1^{(i)},  \dots , P^{(i)}_{k_i}$ satisfy the following
\begin{enumerate}
\item if $k_i=1$ then $P^{(i)}_1$ is a proper block in a cycle of $\b$,
\item if $k_i >1$ then $P^{(i)}_1, \dots , P^{(i)}_{k_i}$  are $k_i$ pairwise disjoint blocks, from one or more cycles of $\b$, such that for any $r \in [k_i]$, $P^{(i)}_rP^{(i)}_{r+1\bmod k_i }$ is not a block in any cycle of $\b$,
\item  $\{P_1^{(1)}, \dots, P_{k_1}^{(1)}, \dots ,P_1^{(h)}, \dots , P_{k_h}^{(h)}\}$ is a set of pairwise disjoints blocks from one or more cycles of $\b$.
\end{enumerate}
 \end{corollary}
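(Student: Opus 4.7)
The plan is to reduce the corollary to Theorem~\ref{1cycleblocks} applied separately to each cycle on which $\alpha$ fails to commute with $\beta$, together with the remark preceding it that partitions the $k$ bad commuting points among cycles. Both the ``defect'' $H(\alpha\beta,\beta\alpha)$ and the block data split cycle-by-cycle, so the one-cycle theorem essentially supplies the whole statement after one bookkeeping step for disjointness across different cycles.

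For the forward direction, suppose $\alpha$ $k$-commutes with $\beta$. By the remark stated just before Theorem~\ref{1cycleblocks}, there exist cycles $\beta_1,\dots,\beta_h$ of $\beta$ and positive integers $k_1,\dots,k_h$ with $k_1+\cdots+k_h=k$ such that $\alpha$ $(k_i,\beta)$-commutes with $\beta_i$ for each $i$ and commutes with $\beta$ on every remaining cycle. Applying Theorem~\ref{1cycleblocks} to each $\beta_i$ individually produces a factorization $\alpha\beta_i\alpha^{-1}=(P_1^{(i)}\cdots P_{k_i}^{(i)})$ whose blocks fulfil condition (1) when $k_i=1$ and condition (2) when $k_i>1$. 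To verify (3), note that $\beta_1,\dots,\beta_h$ are distinct cycles in the disjoint cycle decomposition of $\beta$, so their supports are pairwise disjoint, and since $\alpha$ is a bijection the image sets $\alpha(\set(\beta_i))$ and $\alpha(\set(\beta_j))$ are disjoint for $i\neq j$. Each $P_r^{(i)}$ is a substring of $\alpha\beta_i\alpha^{-1}$, whose support is exactly $\alpha(\set(\beta_i))$; within a fixed $i$, the blocks $P_1^{(i)},\dots,P_{k_i}^{(i)}$ are already pairwise disjoint by Theorem~\ref{1cycleblocks}. Combining these observations yields (3).

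For the converse, assume cycles $\beta_1,\dots,\beta_h$ and blocks $P_r^{(i)}$ satisfying (1), (2), (3) are given, and that $\alpha$ commutes with $\beta$ on every cycle of $\beta$ not in $\{\beta_1,\dots,\beta_h\}$. Applying the ``if'' direction of Theorem~\ref{1cycleblocks} to each $\beta_i$ shows that $\alpha$ $(k_i,\beta)$-commutes with $\beta_i$, contributing exactly $k_i$ bad commuting points in $\set(\beta_i)$. Since $\alpha$ commutes with $\beta$ on every other cycle, the total number of bad commuting points equals $k_1+\cdots+k_h=k$, which by definition means $\alpha$ $k$-commutes with $\beta$.

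I do not foresee a real obstacle here: the content is essentially ``apply Theorem~\ref{1cycleblocks} to each $\beta_i$ and observe that blocks coming from different cycles are automatically disjoint.'' The only spot that requires a sentence of care is condition (3) in the forward direction, and that is immediate from $\alpha$ being a bijection together with the disjointness of supports in a cycle decomposition. Notice that (3) is not needed for the converse, because the one-cycle theorem produces $(k_i,\beta)$-commutation with $\beta_i$ regardless of what $\alpha$ does on the other cycles, and the sum $\sum k_i$ then yields $k$ directly.
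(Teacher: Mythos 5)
Your proof is correct and follows exactly the route the paper intends: the corollary is stated there without proof as an immediate consequence of Theorem~\ref{1cycleblocks} applied cycle-by-cycle, together with the remark that distributes the $k$ bad commuting points among the cycles of $\b$. Your additional observations --- that condition (3) in the forward direction follows from $\a$ being a bijection on the disjoint supports of the $\b_i$, and that (3) is redundant for the converse --- are both accurate.
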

 
\begin{example}
Let $\a, \b \in S_7$, with $\b=(1 \;2\;4 \; 5\;3)(7\;6) $ and $\a=(2\;7)(3\;6 \;4 \;5)$. By direct calculations we can check that $\a$ $(4, \b)$-commutes with $\b_1=(1 \;2\;4 \; 5\;3)$ (the b.c.p. are $1, 2, 3$ and $5$) and $(1, \b)$-commutes with $\b_2=(7\;6)$ (the b.c.p. is $6$). In block notation $\a|_{\set(\b_1)}$ and $\a|_{\set(\b_2)}$ can be expressed as 
\[
\a|_{\set(\b_1), 4}=\left(\begin{array}{|c|c|cc|c|}
1 &  2 & 4 & 5 & 3   \\
1 & 7  & 5  & 3  &6
 \end{array}\right),
  \hspace{0.3cm}
  \a|_{\set(\b_2), 1}=\left(\begin{array}{|cc|}
7 &  6 \\
2 & 4
 \end{array}\right).
 \]
\end{example}
As a first application of Theorem~\ref{1cycleblocks} we present the following generalization of  Lemma 2(b) in \cite{gor} where it was proved only for the case when $\b$ is a product of $m$ disjoint $l$-cycles. 

\begin{proposition}\label{corone}
Let $\b$ be any $n$-permutation whose maximum cycle length in its cycle decomposition is $m$.  If $\a$ commutes with $\b$ on $m-1$ points in an $m$-cycle $\b_j$ of $\b$ then $\a$ commutes with $\b$ on $\b_j$.   
\end{proposition}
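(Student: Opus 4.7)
The plan is to proceed by contradiction and apply Theorem~\ref{1cycleblocks}(1) directly. If $\a$ commutes with $\b$ on exactly $m-1$ of the $m$ points of $\b_j$, then $\a$ fails to commute on exactly one point, i.e., $\a$ $(1,\b)$-commutes with $\b_j$. I would then invoke Theorem~\ref{1cycleblocks}(1), which says that in this case $\a\b_j\a^{-1}=(P_1)$ where $P_1$ is a \emph{proper} block in some cycle of $\b$.

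The key observation is a length count: since $\a\b_j\a^{-1}$ is a conjugate of $\b_j$, it is itself an $m$-cycle, and because $(P_1)$ is the entire cycle decomposition of $\a\b_j\a^{-1}$, the block $P_1$ must have length exactly $m$. But ``proper block in a cycle of $\b$'' means $P_1$ sits inside a cycle of $\b$ of length strictly greater than $|P_1|=m$. This contradicts the hypothesis that $m$ is the maximum cycle length appearing in the cycle decomposition of $\b$. Hence the assumption of exactly one bad commuting point on $\b_j$ is impossible, and so $\a$ must commute with $\b$ on all of $\b_j$.

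I do not foresee a genuine obstacle: the whole argument is a one-line application of Theorem~\ref{1cycleblocks}(1) combined with the maximality of $m$. The only point that deserves a careful sentence is the assertion that $\a\b_j\a^{-1}$ is an $m$-cycle (which is the standard conjugation formula $\a\b_j\a^{-1}=(\a(b_1)\dots\a(b_m))$ recalled in Section~\ref{sec:preliminar}), so that $|P_1|=m$ is forced. Everything else is immediate from the definitions of proper block and of $(k,\b)$-commutation.
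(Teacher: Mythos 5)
Your proof is correct and follows essentially the same route as the paper's: assume a single bad commuting point, invoke Theorem~\ref{1cycleblocks}(1) to get $\a\b_j\a^{-1}=(P_1)$ with $P_1$ a proper block, and contradict the maximality of $m$. Your extra sentence making explicit that $|P_1|=m$ via the conjugation formula is a welcome clarification of a step the paper leaves implicit.
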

\begin{proof}
Suppose that $\a$ and $\b$ do not commute on the remaining point in $\b_j$. By part (1) of Theorem~\ref{1cycleblocks}, $\a \b_j \a^{-1}=(P)$, where $P$ is a proper block in an $l$-cycle of $\b$, i.e., $l > m$, but this is a contradiction because $m$ is the maximum cycle length of cycles in $\b$.  
\end{proof}
The following proposition will be useful in the proofs of some of our results.
\begin{proposition}\label{numberbc}
Let $\a$ and $\b$ be two permutations that $k$-commute, $k>0$. Suppose that $\a$ does not commute with $\b$ on the cycles $\b_1, \dots, \b_r$, of lengths $l_1, \dots l_r$, respectively, and that commutes with the rest of the cycles of $\b$ (if any). Then there exists exactly $r$ cycles, say $\b_1', \dots, \b_r'$, of lengths $l_1, \dots l_r$, respectively, such that $\a\left(\set(\b_1) \cup \dots \cup \set(\b_r)\right)=\set(\b_1') \cup \dots \cup \set(\b_r')$. Even more, suppose that $\a$ does not commute with exactly $h_i$ $i$-cycles of $\b$ and that commutes with the rest of the $i$-cycles of $\b$ (if any). Then there exists exactly $h_i$ $i$-cycles of $\b$ such that each of them contains at least one point that is the image under $\a$ of one b.c.p. of $\a$ and $\b$.
\end{proposition}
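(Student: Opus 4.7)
The plan is to split the cycles of $\b$ into two classes, namely the class $C$ of cycles on which $\a$ commutes with $\b$ and the class $N=\{\b_1,\dots,\b_r\}$ of cycles on which it does not, and then exploit the fact that $\a$, being a bijection on $[n]$, must map each class onto a precisely determined set of cycles of $\b$. By Proposition~\ref{transform}, for every $\gamma\in C$ the permutation $\a$ transforms $\gamma$ into a cycle of $\b$ of the same length. Let $C'$ be the collection of cycles of $\b$ obtained in this way. Because the cycles in $C$ are pairwise disjoint and $\a$ is a bijection, the cycles in $C'$ are pairwise distinct, so $|C|=|C'|$ and the multisets of cycle lengths of $C$ and $C'$ coincide.

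Next I would observe that $[n]=\bigsqcup_{\gamma\in C}\set(\gamma)\,\sqcup\,\bigsqcup_{i=1}^{r}\set(\b_i)$. Applying $\a$ to this identity and using $\a\bigl(\bigsqcup_{\gamma\in C}\set(\gamma)\bigr)=\bigsqcup_{\gamma\in C'}\set(\gamma)$ yields
\[
\a\!\left(\bigcup_{i=1}^{r}\set(\b_i)\right)=[n]\setminus\bigcup_{\gamma\in C'}\set(\gamma)=\bigcup_{\gamma\in N'}\set(\gamma),
\]
where $N'$ denotes the set of cycles of $\b$ not lying in $C'$. Since $C$ and $C'$ have the same cycle type, subtracting length by length from the cycle type of $\b$ shows $N$ and $N'$ also have the same cycle type; in particular $|N'|=r$ and I can label the cycles of $N'$ as $\b_1',\dots,\b_r'$ with $|\b_j'|=l_j$. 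Restricting the same counting to cycles of length $i$ shows $N'$ contains exactly $h_i$ $i$-cycles, which will handle the first half of the ``even more'' statement.

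For the b.c.p.\ part I would invoke Theorem~\ref{1cycleblocks} (together with Remark~\ref{kcommuteblock}) to describe, for each $\b_i\in N$, the decomposition $\a\b_i\a^{-1}=(P_1^{(i)}\dots P_{k_i}^{(i)})$, where each $P_\ell^{(i)}$ is a block in a cycle of $\b$ whose last point is precisely $\a(b)$ for some b.c.p.\ $b$ of $\a$ and $\b$ in $\b_i$. Since $\set(\b_j')\subseteq \a\bigl(\bigcup_{i=1}^r\set(\b_i)\bigr)=\bigsqcup_{i,\ell}\set(P_\ell^{(i)})$ and the $\b_j'$ are disjoint cycles, each $\b_j'\in N'$ must contain at least one entire block $P_\ell^{(i)}$, and hence at least one point of the form $\a(b)$ with $b$ a b.c.p. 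Conversely, cycles in $C'$ are images of commuting cycles and so contain no b.c.p.\ image (because b.c.p.s live in $\bigcup_i\set(\b_i)$). Thus the cycles of $\b$ containing at least one image of a b.c.p.\ are exactly the cycles in $N'$, giving exactly $h_i$ such $i$-cycles for every length $i$.

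The argument is basically a double bookkeeping: a bijection-plus-complement on point sets to identify $N'$, followed by reading off Theorem~\ref{1cycleblocks} to locate b.c.p.\ images inside the blocks $P_\ell^{(i)}$. The only delicate step I anticipate is making the cycle-type preservation airtight when some cycle lengths are repeated, since one must avoid accidentally conflating distinct cycles of the same length in $C'$ (or $N'$); this is handled cleanly by phrasing everything in terms of disjoint unions of point sets before passing to cycle-type counts.
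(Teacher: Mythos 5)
Your proof is correct, and its first half (identifying $N'$ as the point-set complement of the images of the commuting cycles, using Proposition~\ref{transform} plus the bijectivity of $\a$ and a length-by-length count) is the same argument the paper gives. Where you genuinely diverge is the ``even more'' part. The paper proves an auxiliary Claim (Claim~\ref{c-numberbc}): an $i$-cycle all of whose points are images of g.c.p.\ must equal $\a\b_j\a^{-1}$ for some $i$-cycle $\b_j$; the contrapositive then forces each of the $h_i$ cycles of $N'$ to contain a b.c.p.\ image. That Claim needs its own two-step argument (images of g.c.p.\ from different cycles cannot be adjacent, and the source cycle's length can be neither larger nor smaller than $i$). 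You instead read the conclusion directly off the forward direction of Theorem~\ref{1cycleblocks}/Remark~\ref{kcommuteblock}: $\a\bigl(\bigcup_i\set(\b_i)\bigr)$ is a disjoint union of blocks $P^{(i)}_\ell$, each block lies wholly inside a single cycle of $\b$ and ends at a b.c.p.\ image, so every cycle of $N'$ swallows at least one whole block and hence at least one b.c.p.\ image, while cycles of $C'$ contain none. This is a cleaner route that avoids the Claim entirely; its only cost is that you must note explicitly (as you do) that a block meeting $\set(\b_j')$ is contained in it, and that the last point of each block is a b.c.p.\ image, both of which are already recorded in Remark~\ref{kcommuteblock}. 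Both proofs ultimately rest on the same two ingredients, Proposition~\ref{transform} and Theorem~\ref{1cycleblocks}, so the difference is one of execution rather than strategy.
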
 
\begin{proof}
 Let $\b_{r+1}, \dots, \b_{s}$ the rest of cycles of $\b$ of lengths $l_{r+1}, \dots l_s$, respectively. As $\a$ commutes with $\b$ on every one of this cycles then $\a$ transforms each $\b_{t}$ into an $l_t$-cycle $\b_t'$, for $r+1 \leq t \leq s$ and then there exists $\b_{r+1}', \dots, \b_{s}'$ cycles of $\b$ of lengths $l_{r+1}, \dots, l_s$, respectively such that $\a\left(\set(\b_{r+1}) \cup \dots \cup \set(\b_s)\right)=\set(\b_{r+1}') \cup \dots \cup \set(\b_s')$ and the result of the first part of the proposition follows because $\a$ is a bijection. The previous argument also implies that if $\a$ does not commute with exactly $h_i$ $i$-cycles of $\b$ and that commutes with the rest of the $i$-cycles of $\b$ (if any) then there exists exactly $h_i$ $i$-cycles of $\b$, say $\b_1'', \dots, \b_{h_i}''$, such that $\b_t''\neq \a \b_j \a^{-1}$ for any cycle $\b_j$ of $\b$. The following claim completes the proof of the second part 
  \begin{claim}\label{c-numberbc} If all the points in an $i$-cycle $\b_{j'}$ of $\b$ are images under $\a$ of g.c.p., then $\b_{j'}=\a \b_j \a^{-1}$ for some $i$-cycle $\b_j$ of $\b$.
 \end{claim} 
 \begin{proof}
 We first prove that if all the points in the $i$-cycle $\b_{j'}$ are images under $\a$ of $\b$ of g.c.p. of $\a$ and $\b$ then these g.c.p. belong to exactly one $l$-cycle $\b_j=(b_1 \dots  b_l)$. If $\b_{j'}$ contains the images under $\a$ of g.c.p. in different cycles of $\b$, then $\b_{j'}$ contains the string $\a(x)\a(y)$ with $x$ and $y$ in different cycles of $\b$, i.e., $\b(x) \neq y$, but this implies that $x$ is a b.c.p. because $\a(\b(x))\neq \a(y)= \b(\a(x))$. Now we show that $l \leq i$. Suppose that $l >i$ then, and without lost of generality, we have that $\b_j'=(\a(b_1)\dots \a(b_i))$, i.e., $\b_j'(\a(b_t))=\a(b_{t+1 \mod i})$, for every $t$ (if $\b_j'(\a(b_t)) \neq \a(b_{t+1 \mod i})$ for some $t$ then $b_t$ will be a b.c.p.). But this implies that $b_i$ is a b.c.p. because $\b(\a(b_i))=\a(b_1) \neq \a(b_{i+1})=\a(\b(b_i))$, which is a contradiction. Therefore $l \leq i$, and this will implies that $l=i$ (by using a similar argument as the previous one), i.e. $\b_{j'}= \a\b_j\a^{-1}$.
  \end{proof} 
\end{proof}
\subsection{Permutations that $(k, \b)$-commute with a cycle of $\b$}

Let $k \geq 3$ be a positive integer. Let  $\a$ be any permutation that $k$-commutes with $\b$ and that $(k, \b)$-commutes with an $m$-cycle, say $\b_j$, of $\b$, i.e., all the b.c.p. of $\a$ and $\b$ are in $\b_j$. From Proposition~\ref{numberbc} it follows that there exist exactly one  $m$-cycle, say $\b_{j'}$, of $\b$ such that $\set(\b_{j'})=\a(\set(\b_j))$. Using this fact we present a procedure (Algorithm 1) that allows us to obtain any such permutation $\a$. First we give some definitions. 
The {\it canonical cycle notation} of a permutation  $\pi$ is defined as follows: first, write the largest element of each cycle, and then arrange the cycles in increasing order of their first elements. Let $\pi$ be a permutation written in its canonical cycle notation, the {\it transition function} of $\pi$ from canonical cycle notation to one-line notation is the map $\Psi: S_n \rightarrow S_n$ that sends $\pi$ to the permutation $\Psi(\pi)$ written in one-line notation that is obtained from $\pi$ by omitting all the parentheses. This map is a bijection (see, e.g., \cite[p. 96]{bona2}). 

\begin{example}
Let $\pi \in S_7$ be $(431)(65)(72)$ ($\pi$ is written in its canonical cycle notation). Then  $\Psi(\pi)=4316572$.
\end{example}
\begin{algorithm}

\begin{description}
\item[]
\item[Step 1.] Choose $m$-cycles $\b_{j}$ and $\b_{j'}$ of $\b$  (with the possibility that $\b_{j'}=\b_j$), and with $m \geq k$.
 \item[Step 2]  Choose a subset of $k$ points of $\set(\b_{j'})$. Let $P=p_1 p_2    \dots  p_m$ be any improper block of $\b_{j'}$ such that $p_m$ is one of the selected points. Let $p_{h_1}, p_{h_2}, \dots, p_{h_k}=p_m$ the $k$ selected points whit $h_1 < \dots  < h_k$. Now partition the improper block $P$ into $k$ blocks, $P_1, P_2,   \dots ,  P_k$, as follows 
 \[
 \underbrace{p_1 \dots  p_{h_1}}_{P_1}\underbrace{p_{h_1+1} \dots  p_{h_2}}_{P_2} \dots  \underbrace{p_{h_{k-1}+1} \dots  p_{h_k}}_{P_k}, 
 \]
i.e., $P=P_1P_2  \dots   P_k$, and where $p_{h_r}$ is the last point of $P_r$, $1\leq r \leq k$.  
 \item[Step 3]    Choose any $k$-cycle $\tau$ of $[k]=\{1, \dots ,k\}$ with $\tau(a) \neq a+1 \bmod k$, for every $a\in [k]$, and make the block permutation 
 \[
P':=P_{\Psi(\tau)(1)} P_{\Psi(\tau)(2)} \dots  P_{\Psi(\tau)(k)}=P_{i_1} P_{i_2} \dots  P_{i_k},
\]
where $\Psi(\tau)$ is the transition function of the canonical cycle notation of $\tau$ to one-line notation (notice that $\tau$ in its canonical cycle notation is equal to $(i_1 \dots  i_k)$).
\item[Step 4]  Construct $\a|_{\set(\b_j)}: \set(\b_j) \rightarrow \set(\b_{j'})$ in block notation as it follows: 
\[
\a|_{\set(\b_j), k}=\left(
\begin{array}{ccccccccccccccc}
 B_1& B_2& \dots  & B_k \\
P_{i_1}         &  P_{i_2}                & \dots  &   P_{i_k}             
 \end{array}
\right).
\]
where $\b_j=(B_1\dots  B_k)$ and  $|B_r|=|P_{i_r}|$, $1\leq r \leq k$.
\item[Step 5.] Construct $\a|_{[n] \setminus \set(\b_j)}: [n] \setminus \set(\b_j) \rightarrow [n] \setminus \set(\b_{j'})$ as any bijection that commutes with $\b|_{[n] \setminus \set(\b_j)}: [n] \setminus \set(\b_j) \rightarrow [n] \setminus \set(\b_{j'})$. 

\end{description} 
\end{algorithm}
Notice that Step 5 is possible because $\a$ can be constructed in such a way that it transforms the $c_m-1$ $m$-cycles of $\b$ different than $\b_j$ (if any) into the $c_m-1$ $m$-cycles of $\b$ different than $\b_{j'}$ (if any),  and that transforms the $l$-cycles of $\b$ (if any), with $l \neq m$, into $l$-cycles of $\b$ (if any). The following two propositions shows that Algorithm 1 produces all the permutation $\a$ with the desired properties.
\begin{proposition}\label{algorithmworks}
Permutation $\a$ constructed with Algorithm 1 does not commute with $\b$ on all points in $\mathcal{A}:=\a^{-1}(\{p_{h_1},\dots , p_{h_k}\})$ and commutes with $\b$ on all points in $[n] \setminus \mathcal{A}$. 
\end{proposition}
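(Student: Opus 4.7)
The plan is to examine each point of $[n]$ and determine whether it is a g.c.p.\ or a b.c.p.\ of the pair $(\a,\b)$ produced by Algorithm~1. I would first partition $[n]$ into three pieces: the complement $[n]\setminus\set(\b_j)$, the interior points of the blocks $B_1,\dots,B_k$ (all points of $B_r$ except the last), and the collection of last points $\{b_{r,\ell_r}:r\in[k]\}$, where $\b_j=(B_1\dots B_k)$. Points of the first piece are g.c.p.\ directly from Step~5, since $\a|_{[n]\setminus\set(\b_j)}$ is chosen to commute with $\b|_{[n]\setminus\set(\b_j)}$.

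For an interior point $b=b_{r,s}$ of $B_r$ with $s<\ell_r$, the block-wise correspondence of Step~4 makes $\a(b_{r,s+1})$ the point immediately following $\a(b)$ inside $P_{i_r}$. Since $P_{i_r}$ is by construction a block in the cycle $\b_{j'}$ of $\b$, Remark~\ref{betaonblock} yields $\b(\a(b))=\a(b_{r,s+1})$; and clearly $\a(\b(b))=\a(b_{r,s+1})$ as well, so $b$ is a g.c.p.

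The main step is the analysis of the last points $b_{r,\ell_r}$. A direct reading of Step~4 shows $\a\b_j\a\inv=(P_{i_1}P_{i_2}\dots P_{i_k})$; in particular $\a(b_{r,\ell_r})$ is the last point of $P_{i_r}$, which equals $p_{h_{i_r}}$. Because $P=P_1\dots P_k$ is the improper block of $\b_{j'}$, $\b(p_{h_{i_r}})$ equals the first point of $P_{i_r+1\bmod k}$. On the other hand $\a(\b_j(b_{r,\ell_r}))=\a(b_{r+1\bmod k,1})$ is the first point of $P_{i_{r+1\bmod k}}$. So $b_{r,\ell_r}$ is a b.c.p.\ iff $i_r+1\bmod k\neq i_{r+1\bmod k}$, i.e., iff $\tau(i_r)\neq i_r+1\bmod k$, which is exactly the condition imposed on $\tau$ in Step~3 (applied with $a=i_r$). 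Hence all $k$ last points are b.c.p.

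To finish, I would observe that $\a$ carries $\{b_{r,\ell_r}:r\in[k]\}$ bijectively onto $\{p_{h_{i_r}}:r\in[k]\}$; since $\tau$ is a $k$-cycle, $\{i_1,\dots,i_k\}=[k]$, so this target equals $\{p_{h_1},\dots,p_{h_k}\}$. Therefore the set of b.c.p.\ is exactly $\a\inv(\{p_{h_1},\dots,p_{h_k}\})=\mathcal{A}$. The real difficulty I anticipate is the bookkeeping that aligns the three cyclic orderings at play: the order $B_1,\dots,B_k$ inside $\b_j$, the order $P_{i_1},\dots,P_{i_k}$ inside $\a\b_j\a\inv$, and the original order $P_1,\dots,P_k$ inside the improper block $P$ of $\b_{j'}$. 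Once that labeling is tracked carefully, the constraint on $\tau$ in Step~3 yields precisely the claimed set of b.c.p.
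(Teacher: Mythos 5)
Your proof is correct and is in substance the same as the paper's: the paper notes that by Steps 3--4 one has $\a\b_j\a\inv=(P_{i_1}\dots P_{i_k})$ with $P_{i_r}P_{i_{r+1\bmod k}}$ never a block of $\b$ (exactly your condition $\tau(i_r)\neq i_r+1\bmod k$), then invokes Theorem~\ref{1cycleblocks} and Remark~\ref{kcommuteblock} to conclude the b.c.p.\ are precisely the points $\a\inv(p_{h_r})$, and handles $[n]\setminus\set(\b_j)$ via Step~5. You simply unfold the proof of Theorem~\ref{1cycleblocks} and verify the three classes of points directly; the key computation at the last point of each block is identical.
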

\begin{proof}
Let $\b_j$ and $\b_{j'}$ be the cycles of $\b$ selected in Step 1 of Algorithm 1, and $\{p_{h_1}, \dots , p_{h_k}\} $ the subset of $\set(\b_{j'})$ selected in Step 2. By the way in which $\a$ is constructed of in Step 3 and 4, $\a \b_j \a^{-1}$ has $P_{i_1} P_{i_2}                \dots    P_{i_k}$ as an improper block, where $P_{i_r}P_{i_{r+1 \bmod k}}$, $1\leq r \leq k$, is not a block in any cycle of $\b$ (by Step 3, $i_{r+1 \bmod k} - i_r \bmod k \neq 1$). From Theorem~\ref{1cycleblocks}, we have that $\a$ does not commute with $\b$ on exactly $k$ points in $\set(\b_j)$. Even more, in the proof of Theorem~\ref{1cycleblocks} was showed that $\a$ and $\b$ do not commute on $\a^{-1}(p_{h_r})$, $1\leq r \leq k$ (see Remark \ref{kcommuteblock}).  Finally, by the  construction of $\a$ in Step 5, $\a$ and $\b$ commute on all points in  $[n] \setminus \set(\b_j)$. 
\end{proof}
\begin{proposition}\label{clama}  Let $k \geq 3$. Any permutation $\a$ that $k$-commutes with $\b$ and such that all the b.c.p. of $\a$ and $\b$ are in one cycle of $\b$ can be obtained with  Algorithm 1.
\end{proposition}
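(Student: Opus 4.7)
The plan is to reverse-engineer Algorithm~1: starting from any $\a$ satisfying the hypothesis, I exhibit explicit choices in Steps~1--5 that reproduce $\a$ on all of $[n]$. First, let $\b_j$ be the unique cycle of $\b$ containing all $k$ bad commuting points; denote its length by $m$, so $m\geq k$. Proposition~\ref{numberbc} produces a unique $m$-cycle $\b_{j'}$ of $\b$ with $\a(\set(\b_j))=\set(\b_{j'})$, and these are the selections for Step~1.

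For Step~2, I apply Theorem~\ref{1cycleblocks}(2) to $\b_j$ to obtain $\a\b_j\a^{-1}=(Q_1 Q_2 \dots Q_k)$, where the $Q_r$ are pairwise disjoint blocks from cycles of $\b$ and $Q_r Q_{r+1 \bmod k}$ is never a block in any cycle of $\b$. Because the support of $\a\b_j\a^{-1}$ equals $\set(\b_{j'})$, each $Q_r$ lies in $\set(\b_{j'})$ and is therefore a block of $\b_{j'}$; collectively the $Q_r$'s partition $\set(\b_{j'})$. I then relabel them as $P_1,\dots,P_k$ in the order they appear cyclically inside $\b_{j'}$, so $\b_{j'}=(P_1 P_2 \dots P_k)$, and take $p_{h_r}$ to be the last point of $P_r$; this furnishes the $k$ selected points and the improper block $P=P_1 P_2 \dots P_k$ of Step~2, with starting rotation chosen so $p_{h_k}$ is terminal.

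For Step~3, I write $\a\b_j\a^{-1}=(P_{i_1} P_{i_2} \dots P_{i_k})$, cyclically rotated so that $i_k=k$, and set $\tau:=(i_1 i_2 \dots i_k)$, already in canonical cycle notation; then $P'=P_{\Psi(\tau)(1)}\dots P_{\Psi(\tau)(k)}$. The essential check is $\tau(a)\neq a+1 \bmod k$ for every $a\in[k]$: since the $P_r$'s appear in $\b_{j'}$ in the cyclic order $P_1,P_2,\dots,P_k$, the concatenation $P_s P_t$ is a block of some cycle of $\b$ iff $t\equiv s+1 \pmod k$ (points in distinct cycles of $\b$ are disjoint, so no other placement can yield a block in a cycle), and the non-block condition of Theorem~\ref{1cycleblocks}(2) applied to $P_{i_r}P_{i_{r+1 \bmod k}}$ translates precisely into $i_{r+1 \bmod k}\neq i_r+1 \bmod k$, i.e.\ $\tau(a)\neq a+1 \bmod k$. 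Step~4 then reconstructs $\a|_{\set(\b_j)}$ by sending the $r$-th block of $\b_j$ (the one induced by the factorization $\a\b_j\a^{-1}=(P_{i_1}\dots P_{i_k})$) to $P_{i_r}$, matching the given $\a$; and Step~5 is realized by taking $\a|_{[n]\setminus\set(\b_j)}$ itself, a bijection onto $[n]\setminus\set(\b_{j'})$ that commutes with $\b$ there because every b.c.p.\ lies in $\b_j$. I expect the main obstacle to be the careful bookkeeping of the cyclic rotations—both the starting rotation of the improper block in Step~2 and the rotation that places $k$ last in $\tau$—in order to confirm that the non-block condition of Theorem~\ref{1cycleblocks}(2) matches the combinatorial condition $\tau(a)\neq a+1 \bmod k$ of Step~3 exactly.
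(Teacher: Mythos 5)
Your proof is correct and follows essentially the same route as the paper's: identify $\b_{j'}$ via Proposition~\ref{numberbc}, decompose $\a\b_j\a^{-1}$ into blocks via Theorem~\ref{1cycleblocks}, relabel the blocks by their cyclic order inside $\b_{j'}$, verify that the induced $k$-cycle $\tau$ satisfies $\tau(a)\neq a+1 \bmod k$, and take $\a$ itself on the complement for Step~5. The only slip is cosmetic: the paper's canonical cycle notation puts the \emph{largest} element first, so the correct rotation is $i_1=k$ rather than $i_k=k$ (the paper rotates the cyclic expression to begin with the block labelled $k$); since this changes only the written rotation of the same cycle, your $\tau$ and the reconstructed map are unaffected.
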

\begin{proof}
Let $\a$ be any permutation that $k$-commutes with $\b$ and such that all the b.c.p. are in exactly one $m$-cycle, say $\b_j$, of $\b$. From Proposition~\ref{numberbc} it follows that there exists exactly one $m$-cycle, say  $\b_{j'}$, of $\b$ such that $\a(\set(\b_j))=\set(\b_{j'})$. By Theorem~\ref{1cycleblocks}, we have that $\a \b_j \a^{-1}=(P_1 \dots  P_k)$, where $P_{1}, \dots , P_{k}$ are $k$ pairwise disjoint blocks in $\b_{j'}$ and $P_{r} P_{r+1 \mod k}$ is not a block in any cycle of $\b$, $1\leq r \leq k$. As $\a(\set(\b_j))=\set(\b_{j'})$,  we have that $P_{1} \dots  P_{k} $  is a block permutation of $B'=P_{i_1} \dots  P_{i_k}$, where $\b_{j'}=(B')$.  Now, rename the blocks $P_{i_s}$ as $B'_s$ to obtain $B'=B'_1 \dots  B'_k$. In this way, $\a \b_j \a^{-1}=(B'_{l_1}\dots  B'_{l_k})$ with  $l_{r+1 \bmod k}-l_r \bmod k \neq 1$, $1\leq r \leq k$. Indeed, if $l_{r+1 \bmod k}-l_r \bmod k= 1$ for some $r \in \{1, \dots , k\}$, then $B'_{l_r}B'_{l_{r+1 \bmod k}}$ will be a block in $\b_{j'}$, and hence the number of b.c.p. of $\a$ and $\b$ will be less than $k$, which is a contradiction. 

As $\a \b_j \a^{-1}=(B'_{l_1} \dots  B'_{l_k})=(B'_{l_2} \dots  B'_{l_1})=\dots  =(B'_{l_k} \dots  B'_{l_{k-1}})$, we can assume without lost of generality that $l_1=k$ (from these $k$ expressions, choose the one that begins with block $B_k'$).
Then $\a|_{\set(\b_j)}$ can be written as 

\[
\a|_{\set(\b_j), k}=\left(
\begin{array}{ccccccccccccccc}
 B_1&  \dots  & B_k \\
B'_{l_1}         &  \dots  &   B'_{l_k}              
 \end{array}
\right), 
\]
where $\b_j=(B_1\dots  B_k)$, and $|B_i|=|B'_{h_i}|$, $1\leq i \leq k$.

Now, we consider $l_1 l_2 \dots  l_{k}$ as a permutation, named $\pi$, of $\{1, \dots , k\}$ in one-line notation. As $l_1$ (that is equal to $k$) is the greatest element in $\{l_1, \dots , l_k\}$, then $\tau :=\Psi^{-1}(\pi)=(l_1 \dots  l_{k})$, where $\Psi$ is the transition function of the canonical cycle notation to one-line notation.  Notice that $\tau$ is a $k$-cycle  in $S_k$ such that $\tau(a) \neq a+1$, for any $a \in [k]$. Thus we conclude that $\a|_{\set(\b_j)}$ can be obtained by Steps 1 to 4 of Algorithm 1. Now as $\a$ commutes with $\b$ on all different cycles from $\b_j$, $\a|_{[n]\setminus \set(\b_j)}$ can be obtained with Step 5 of Algorithm 1.
 \end{proof}
\section{On the number $c(k, \b)$}\label{numberckb}
 In this section we present some results about the number $c(k, \b)$. First we show that for any nonnegative integer $k$ and any $\b \in S_n$, the number $c(k, \b)$ is invariant under conjugation. Let $C(k, \b)=\{\a \in S_n : H(\a\b, \b\a)=k\}$ (so that $c(k, \b)=|C(k, \b)|$)
  
\begin{proposition}\label{propeqcar} Let $\b \in S_n$. Then $c(k, \tau \b \tau^{-1}) =c(k, \b)$ for any $\tau \in S_n$.

\end{proposition}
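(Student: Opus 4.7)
The plan is to exhibit an explicit bijection between $C(k,\b)$ and $C(k,\tau\b\tau^{-1})$, namely the conjugation map $\a \mapsto \tau\a\tau^{-1}$. Since this map is already a bijection $S_n \to S_n$, the whole task reduces to checking that it sends $C(k,\b)$ into $C(k,\tau\b\tau^{-1})$ (and the same argument with $\tau^{-1}$ in place of $\tau$ gives the reverse inclusion).

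For the verification, I would take $\a \in C(k,\b)$ and set $\a' = \tau\a\tau^{-1}$, $\b' = \tau\b\tau^{-1}$. A direct computation gives
\[
\a'\b' = \tau\a\b\tau^{-1}, \qquad \b'\a' = \tau\b\a\tau^{-1}.
\]
Then I would invoke the bi-invariance of the Hamming metric (recalled in Section~\ref{sec:preliminar}): left multiplication by $\tau$ and right multiplication by $\tau^{-1}$ preserve $H$, so
\[
H(\a'\b',\b'\a') = H(\tau\a\b\tau^{-1},\tau\b\a\tau^{-1}) = H(\a\b,\b\a) = k.
\]
Hence $\a' \in C(k,\b')$, so conjugation by $\tau$ maps $C(k,\b)$ into $C(k,\tau\b\tau^{-1})$.

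Because the same reasoning applied to $\tau^{-1}$ shows that conjugation by $\tau^{-1}$ maps $C(k,\tau\b\tau^{-1})$ into $C(k,\b)$, and these two conjugations are mutually inverse, we obtain a bijection between the two sets, giving $c(k,\tau\b\tau^{-1}) = c(k,\b)$. There is essentially no obstacle here: the argument is short and the only non-trivial ingredient is bi-invariance of $H$, which is standard. The proof can be phrased even more compactly by noting that $\a\b(\b\a)^{-1}$ and $\a'\b'(\b'\a')^{-1}$ are conjugate (by $\tau$), hence have the same number of non-fixed points, which equals $H(\a\b,\b\a)$ and $H(\a'\b',\b'\a')$ respectively; but the bi-invariance version is cleanest and fits the notation already established in the paper.
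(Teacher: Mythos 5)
Your proposal is correct and follows essentially the same route as the paper: both use the conjugation map $\a \mapsto \tau\a\tau^{-1}$ together with the bi-invariance of the Hamming metric to show $C(k,\tau\b\tau^{-1}) = \tau C(k,\b)\tau^{-1}$ and conclude equality of cardinalities. Your write-up simply spells out the verification that the paper leaves as "straightforward."
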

\begin{proof}({\it Sketch})
For $\tau \in S_n$, let $\tau C(k, \b) \tau^{-1}:=\{\tau \a \tau^{-1} : \a \in C(k, \b)\}$. By the bi-invariance of the Hamming metric is straightforward to show that $C(k, \tau \b \tau^{-1})=\tau C(k, \b) \tau^{-1}$.   
Now, it is easy to check that the function $\phi: C(k, \b) \rightarrow  \tau C(k, \b) \tau^{-1}$ given by $\sigma \mapsto \tau \sigma \tau^{-1}$ is a bijection, so we have that  $ |C(k, \b)|=|\tau C(k, \b) \tau^{-1}|= | C(k, \tau \b \tau^{-1})|$.

\end{proof}

\begin{remark} 
If $\b$ and $\b'$ are conjugate permutations it is not always true that $C(k, \b)=C(k, \b')$. 
For example, let  $\b=(12345) \in S_5$ and $\b'=(23145)$. If $\a=(14)(25)$ then $H(\a\b, \b\a)=3$ and $H(\a\b', \b'\a)=5$, which implies that $C(3, \b) \neq C(3, \b')$.
\end{remark}

The following result shows that $c(k, \b)$ is a multiple of  $|C_{S_n}(\b)|$.  
\begin{proposition}\label{union} Let  $\b \in S_n$. Suppose that $C(k,\b)$ is a non-empty set. Then 
\[
C(k,\b)=\bigcup_{\a \in C(k, \b)}C_{S_n}(\b)\a.
\]
\end{proposition}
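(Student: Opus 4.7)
The plan is to establish the equality as two set inclusions, with the nontrivial content being that $C(k,\b)$ is closed under left multiplication by elements of the centralizer $C_{S_n}(\b)$.

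First, the inclusion $C(k,\b) \subseteq \bigcup_{\a \in C(k,\b)} C_{S_n}(\b)\a$ is immediate: for each $\a \in C(k,\b)$, the identity $\id$ lies in $C_{S_n}(\b)$, so $\a = \id \cdot \a$ belongs to the coset $C_{S_n}(\b)\a$, and hence to the union.

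For the reverse inclusion, I would show that for every $\gamma \in C_{S_n}(\b)$ and every $\a \in C(k,\b)$, the product $\gamma\a$ lies in $C(k,\b)$. The computation relies on two ingredients: the defining property $\gamma\b = \b\gamma$ of the centralizer, and the left-invariance of the Hamming metric (stated in Section~\ref{sec:preliminar}). Specifically,
\[
H\bigl((\gamma\a)\b,\; \b(\gamma\a)\bigr) \;=\; H\bigl(\gamma(\a\b),\; \gamma(\b\a)\bigr) \;=\; H(\a\b,\; \b\a) \;=\; k,
\]
where the first equality uses $\b\gamma\a = \gamma\b\a$ and the second uses left-invariance of $H$. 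This proves that $C_{S_n}(\b)\a \subseteq C(k,\b)$ for every $\a \in C(k,\b)$, and taking the union over all such $\a$ gives $\bigcup_{\a \in C(k,\b)} C_{S_n}(\b)\a \subseteq C(k,\b)$.

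Combining both inclusions yields the claimed equality. There is no serious obstacle in this argument; the whole proof reduces to the one-line computation above, which is just a restatement of the fact that left translation by a centralizing element is an isometry of $(S_n, H)$ that fixes $C(k, \b)$ setwise. The nonemptiness hypothesis on $C(k,\b)$ is only needed so that the indexing set of the union is nonempty and the identity $C(k,\b) = \emptyset = \emptyset$ is avoided as a degenerate case.
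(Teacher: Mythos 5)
Your proof is correct and is essentially the same as the paper's: both reduce the reverse inclusion to the computation $H(\tau\a\b,\b\tau\a)=H(\tau\a\b,\tau\b\a)=H(\a\b,\b\a)=k$ using the centralizer relation and the (left-)invariance of the Hamming metric. No substantive differences.
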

\begin{proof}
The inclusion $C(k,\b) \subseteq \bigcup_{\a \in C(k, \b)}C_{S_n}(\b)\a$ is clear. Now, let  $\rho \in \bigcup_{\a \in C(k, \b)}C_{S_n}(\b)\a$,   then $\rho= \tau \a$ for some $\tau \in C_{S_n}(\b)$ and some $\a \in C(k, \b)$. So we have that
$
H(\rho \b,\b \rho)=H(\tau \a \b,\b \tau \a)=H(\tau \a \b,\tau \b \a)=H(\a \b,\b \a)=k, 
$
and then $\rho \in C(k, \b)$.
\end{proof}
\begin{corollary}
 Let  $\b \in S_n$ and let $k$ be any non-negative integer. Then $c(k,\b)$ is a multiple of $|C_{S_n}(\b)|$.
\end{corollary}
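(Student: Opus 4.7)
The plan is simply to deduce this from Proposition~\ref{union} together with the standard fact that any two right cosets of a subgroup are either equal or disjoint. First, if $C(k,\b)$ is empty then $c(k,\b)=0$, which is trivially a multiple of $|C_{S_n}(\b)|$; so for the rest of the argument we may assume $C(k,\b)\neq \emptyset$.

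Under this assumption, Proposition~\ref{union} presents $C(k,\b)$ as a union of right cosets of the centralizer $C_{S_n}(\b)$ in $S_n$, namely $C(k,\b)=\bigcup_{\a\in C(k,\b)} C_{S_n}(\b)\a$. I would then invoke the elementary group-theoretic fact that two right cosets of a subgroup of $S_n$ are either equal or disjoint, and that every right coset of $C_{S_n}(\b)$ has the same cardinality $|C_{S_n}(\b)|$. Hence one can extract finitely many distinct representatives $\a_1,\dots,\a_d\in C(k,\b)$ so that $C(k,\b)$ is the disjoint union of the cosets $C_{S_n}(\b)\a_1,\dots,C_{S_n}(\b)\a_d$.

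Taking cardinalities then immediately gives $c(k,\b)=d\cdot|C_{S_n}(\b)|$, which is the desired divisibility. There is no real obstacle here beyond Proposition~\ref{union} itself, which has already done the essential work of showing that every right coset of $C_{S_n}(\b)$ that meets $C(k,\b)$ is actually contained in $C(k,\b)$; once that is in hand, the corollary is just the observation that a set which is a union of cosets of a subgroup has cardinality divisible by the size of that subgroup.
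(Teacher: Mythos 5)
Your proof is correct and follows exactly the route the paper intends: Proposition~\ref{union} exhibits $C(k,\b)$ as a union of right cosets of $C_{S_n}(\b)$, and the standard facts that distinct cosets are disjoint and all have cardinality $|C_{S_n}(\b)|$ give the divisibility at once. The paper leaves this step implicit, but your argument (including the careful handling of the empty case) is precisely the intended one.
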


\subsection{Number of even permutations in $C(k, \b)$}

One interesting application of Proposition~\ref{union} is that for $\b$ of some special cycle type we can find the proportion of even permutations in $C(k, \b)$. We say that a permutation is a  {\it cdoi-permutation} if  its  cycle type consist of distinct odd integers. We need the following propositions.

\begin{proposition}\label{not-c-odd}\cite[exercise 21, p.\ 131.]{dum}
The permutation $\s \in S_n$ does not commute with any odd permutation if and only if the cycle type of $\s$ consist of distinct odd integers.
\end{proposition}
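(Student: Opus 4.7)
The plan is to prove the biconditional by handling each direction separately, using the structure of the centralizer of $\s$ in $S_n$.

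For the $(\Leftarrow)$ direction, I would assume the cycle type of $\s$ consists of distinct odd integers and show $C_{S_n}(\s) \subseteq A_n$. Let $\t \in C_{S_n}(\s)$. A well-known fact about centralizers in $S_n$ is that elements of $C_{S_n}(\s)$ permute the cycles of $\s$ of equal length among themselves. Since in our situation all cycle lengths are \emph{distinct}, $\t$ must send each cycle of $\s$ to itself (as a set). Consequently, on each cycle $\s_j$ of length $m$, the restriction $\t|_{\set(\s_j)}$ is a power of $\s_j$. Since every $m$ here is odd, $\s_j$ itself is an even permutation, and hence so is every power of it. Writing $\t$ as the product of these restrictions, $\t$ is a product of even permutations and is therefore even.

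For the $(\Rightarrow)$ direction, I would prove the contrapositive: if the cycle type of $\s$ is not a collection of distinct odd integers, then $C_{S_n}(\s)$ contains an odd permutation. Two cases cover the failure of the hypothesis. \emph{Case 1:} some cycle $\s_j$ of $\s$ has even length $m$. Then $\s_j$ itself commutes with $\s$ (disjoint cycles commute), and as a single $m$-cycle with $m$ even it is an odd permutation. \emph{Case 2:} all cycles of $\s$ have odd length, but two distinct cycles share a length $k$, say $\s_j=(a_1\dots a_k)$ and $\s_{j'}=(b_1\dots b_k)$. Define
\[
\t = (a_1\;b_1)(a_2\;b_2)\cdots(a_k\;b_k).
\]
A direct check (that I would include but not belabor) shows $\t\s=\s\t$: on $a_i$, both sides give $b_{i+1\bmod k}$, similarly on $b_i$, and $\t$ fixes all other points of $[n]$ while $\s$ preserves the union of the remaining cycles, so the two commute there trivially. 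Since $\t$ is a product of $k$ transpositions with $k$ odd, $\t$ is itself an odd permutation lying in $C_{S_n}(\s)$.

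The routine step is the commutation verification in Case 2; there is no real obstacle since both directions rely only on the classical description of $C_{S_n}(\s)$ as a product of wreath-like factors $C_m \wr S_{c_m}$, and on the parity of a single $m$-cycle being $(-1)^{m-1}$. The only small care needed is to be explicit that ``distinct odd integers'' forbids repeated cycle lengths (ruling out nontrivial permutations between cycles of equal length) and simultaneously forbids even cycle lengths (ruling out odd cycles themselves as central elements), which is exactly what the two cases of the contrapositive exploit.
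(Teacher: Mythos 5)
The paper does not prove this proposition at all: it is quoted as a known result, cited to Dummit and Foote (exercise 21, p.~131), so there is no in-paper argument to compare against. Your proof is correct and complete: the $(\Leftarrow)$ direction correctly reduces to the fact that when all cycle lengths are distinct a centralizing element preserves each cycle setwise and restricts to a power of that cycle (each an even permutation since the lengths are odd), and the contrapositive of $(\Rightarrow)$ is handled by the two standard witnesses --- an even-length cycle of $\s$ itself, or the product $(a_1\,b_1)\cdots(a_k\,b_k)$ of an odd number $k$ of transpositions interchanging two cycles of equal odd length $k$ (including the degenerate case $k=1$ of two fixed points). This is exactly the expected textbook argument, so you have in effect supplied the proof the paper delegates to its reference.
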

\begin{proposition}\label{c-con-odd}\cite[exercise 3.22, p.\ 51.]{rot}
If $G \leq S_n$ contains an odd permutation, then $|G|$ is even, and exactly half of the elements in $G$ are odd permutations.
\end{proposition}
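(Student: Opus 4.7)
The plan is to use the sign homomorphism $\mathrm{sgn}\colon S_n \to \{+1,-1\}$ restricted to $G$. Its kernel is $G \cap A_n$, the set of even permutations in $G$, and since by hypothesis $G$ contains an odd permutation, the restriction $\mathrm{sgn}|_G$ is surjective onto $\{+1,-1\}$. The first isomorphism theorem then yields $G/(G \cap A_n) \cong \{+1,-1\}$, a group of order $2$. Therefore $|G| = 2\,|G \cap A_n|$, which simultaneously shows that $|G|$ is even and that exactly half of its elements are even (those lying in the kernel), the remaining half being odd.

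A more elementary variant that avoids invoking the first isomorphism theorem goes as follows. Write $E = G \cap A_n$ and $O = G \setminus E$, and fix any $\sigma \in O$, which exists by hypothesis. Left-multiplication by $\sigma$ is a bijection $G \to G$. Because an odd times an even permutation is odd and an odd times an odd permutation is even, this bijection sends $E$ into $O$ and $O$ into $E$. Hence $|E| \leq |O|$ and $|O| \leq |E|$, so $|E| = |O|$, and consequently $|G| = |E| + |O| = 2|E|$.

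I expect no genuine obstacle here: this is a standard group-theoretic fact, and either approach gives both conclusions of the statement at once. The only thing to verify carefully is that left-multiplication by a fixed odd element interchanges the even and odd parts of $G$, which is immediate from the multiplicativity of the sign map. Given that the proposition is being used in the paper as a black-box step toward computing the proportion of even permutations in $C(k,\b)$ when $\b$ has cdoi cycle type, a one- or two-line proof citing the sign homomorphism argument would suffice in the text.
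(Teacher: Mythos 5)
Your proof is correct; both the sign-homomorphism argument and the elementary left-multiplication bijection are standard and complete. Note that the paper itself offers no proof of this proposition---it is cited as a black box from Rotman (exercise 3.22, p.~51)---so there is nothing to compare against, and either of your two arguments would serve as a self-contained justification.
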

We are ready to prove the following.
\begin{proposition}
Let $\b \in S_n$ be a permutation that is not a cdoi-permutation. Then exactly one half of the permutations in $C(k, \b)$ are odd. 
\end{proposition}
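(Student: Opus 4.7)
The plan is to combine Proposition~\ref{union} with the two group-theoretic facts stated just above (Propositions~\ref{not-c-odd} and~\ref{c-con-odd}). Since $\b$ is not a cdoi-permutation, Proposition~\ref{not-c-odd} tells us that $\b$ \emph{does} commute with some odd permutation, i.e.\ the subgroup $C_{S_n}(\b) \leq S_n$ contains an odd permutation. Then Proposition~\ref{c-con-odd} applied to $G=C_{S_n}(\b)$ yields that exactly one half of the elements of $C_{S_n}(\b)$ are odd.

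Next I would exploit the coset decomposition from Proposition~\ref{union}: $C(k,\b) = \bigcup_{\a \in C(k,\b)} C_{S_n}(\b)\a$. This is a union of right cosets of $C_{S_n}(\b)$ in $S_n$; as right cosets, any two of them are either equal or disjoint, so the union is in fact a disjoint union of some collection of right cosets. The key observation is that each single right coset $C_{S_n}(\b)\a$ contains exactly half odd permutations: the map $\tau \mapsto \tau\a$ is a bijection from $C_{S_n}(\b)$ onto $C_{S_n}(\b)\a$, and since $\a$ has a fixed parity (even or odd), right multiplication by $\a$ either preserves the parity of every $\tau$ or flips the parity of every $\tau$. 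In either case, because exactly half of $C_{S_n}(\b)$ is odd, exactly half of $C_{S_n}(\b)\a$ is odd.

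Summing over the disjoint cosets that make up $C(k,\b)$ gives that exactly one half of $C(k,\b)$ consists of odd permutations, as desired. There is no real obstacle here: the only subtlety is remembering to note the hypothesis (that $C(k,\b)$ is nonempty, so that Proposition~\ref{union} applies and the statement is nonvacuous) and to use disjointness of distinct right cosets before adding up the per-coset counts. Everything else is immediate from the cited results.
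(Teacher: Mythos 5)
Your proposal is correct and follows essentially the same route as the paper's own proof: Proposition~\ref{not-c-odd} to place an odd permutation in $C_{S_n}(\b)$, Proposition~\ref{c-con-odd} to get the half-and-half split of the centralizer, and the coset decomposition of Proposition~\ref{union} to transfer that split to $C(k,\b)$. Your explicit remarks on the disjointness of distinct right cosets and on the parity-preserving-or-flipping effect of right multiplication by $\a$ merely make precise what the paper leaves implicit.
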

\begin{proof}
As the cycle type of $\b$ does not consist only of distinct odd integers, then from Proposition \ref{not-c-odd} it follows that $\b$ commutes with an odd permutation, i.e., $C_{S_n}(\b)$ contains at least one odd permutation. By Proposition \ref{c-con-odd} we have that exactly one half of the elements in $C_{S_n}(\b)$  are odd permutations. Then, for any $\a$ in $C(k, \b)$, the number of odd permutations in $\cen(\b)\a$ remains one half. Finally,  as  
$
C(k,\b)=\bigcup_{\a \in C(k, \b)}C_{S_n}(\b)\a 
$
(by Proposition \ref{union}), we have that exactly one half permutations in $C(k, \b)$ are odd.
\end{proof}

\begin{corollary}
If $\b$ is not a cdoi-permutation then $c(k, \b)$ is even.
\end{corollary}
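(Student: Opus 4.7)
The plan is to derive this corollary as an immediate consequence of the preceding proposition. That proposition asserts that when $\b$ is not a cdoi-permutation, exactly one half of the elements of $C(k,\b)$ are odd permutations. For a finite set to admit a partition into two subsets of equal cardinality (here, the odd and the even permutations in $C(k,\b)$), its cardinality must be an even integer. Hence $c(k,\b)=|C(k,\b)|$ is even.

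The only subtlety is the degenerate case in which $C(k,\b)=\emptyset$, since the statement ``exactly one half are odd'' becomes vacuous. In this case $c(k,\b)=0$, which is even as well, so the conclusion still holds. I would dispatch this case explicitly at the start of the proof to keep the argument clean, and then assume $C(k,\b)\neq\emptyset$ for the substantive step.

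I do not expect any genuine obstacle: all the work has been done in establishing the preceding proposition (via the decomposition $C(k,\b)=\bigcup_{\a\in C(k,\b)} C_{S_n}(\b)\a$ from Proposition~\ref{union} combined with the fact that $C_{S_n}(\b)$ contains an odd permutation whenever $\b$ is not a cdoi-permutation, together with Proposition~\ref{c-con-odd}). The corollary is then simply the arithmetic observation that ``half odd'' forces ``even total.'' Consequently the proof will be only one or two sentences long, essentially reading: ``By the previous proposition the number of odd permutations in $C(k,\b)$ equals $c(k,\b)/2$, which must be an integer, so $c(k,\b)$ is even.''
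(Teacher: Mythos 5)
Your proposal is correct and matches the paper's (implicit) reasoning: the corollary is stated as an immediate consequence of the preceding proposition that exactly half of $C(k,\b)$ consists of odd permutations, so $c(k,\b)=2\cdot|\{\text{odd elements}\}|$ is even. Your explicit handling of the vacuous case $C(k,\b)=\emptyset$ is a reasonable extra precaution but does not change the argument.
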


\begin{question} Let $\b$ be a cdoi-permutation, what is the number of even permutations that $k$-commute with $\b$?
\end{question}

\subsection{On the number $c(\lam_{k^{(1)}}, \b)$}

We use the following notation: Let $\lam_k=[k_1, \dots , k_h]$ denotes an integer partition of $k$, with $k_i \geq 1$. 
If $\lam_k=[k_1, \dots , k_h]$ is an integer partition of $k$, we use  $C(\lambda_k, \b)$, or $C([k_1, \dots , k_h], \b)$, to denote the set of permutations that $k$-commutes with $\b$ such that for every $\alpha \in C([k_1, \dots , k_h], \b)$ there exists exactly $h$ cycles, says $\b_1, \dots, \b_h$, in the cycle decomposition of $\b$, where $\alpha$ $(k_1, \b)$-commutes with $\b_1$, $(k_2, \b)$-commutes with $\b_2$, \dots , $(k_h, \b)$-commutes with $\b_h$. We will use $c(\lambda_k, \b)$ or $c([k_1, \dots , k_h], \b)$ to denote the cardinality of $C(\lambda_k, \b)$. We will use the notation $c(\lam_{k^{(1)}}, \b)$ and $c(\lam_{k^{(k)}}, \b)$ instead of $c([k_1], \b)$ and $c([1, \dots , 1], \b)$, respectively.

\begin{example}
Let $\a, \b \in S_{14}$, with $\b=(1\;2)(3\;4\;5)(6\;7\;8)(9\;10\;11\;12)(13\;14)$ and $\a=(1\; 3\; 9\;6)(2\;4\;10\;7)(5\;11\;8)$. We have that $\a$ $6$-commutes with $\b$ in the following way: $\a$ $(1, \b)$-commutes with $(1\;2)$, $(1, \b)$-commutes with $(3\;4\;5)$, $(2, \b)$-commutes with $(6\;7\;8)$, $(2, \b)$-commutes with $(9\;10\;11\;12)$ and commutes with $\b$ on $(13\;14)$. 
\end{example}

\begin{remark}
We are considering unordered partitions, for example $[2, 2, 1]=[2, 1, 2]$.
\end{remark}

Let  $L(\b)$ denotes the set of cycle lengths in the cycle decomposition of $\b$ including $1$-cycles. Let $\mc_l(\b)$ be the set of all $l$-cycles in the cycle decomposition of $\b$. Let $\ms(\b)$ be a subset of the set of all the cycles in $\b$. Suppose that we want to construct a permutation $\alpha$ that will not commute with $\b$ on all the cycles in $\ms(\b)$ (for every cycle $\b_j \in \ms(\b)$, $\alpha$ and $\b$ will not commute on at least one point in $\b_j$) and that commutes on every point of the remaining (if any) cycles of $\b$.  Let $L_\ms$ be the set of lengths of cycles in $\ms(\b)$ and let $\ms_l$ be the set of all cycles of length $l$ in $\ms(\b)$. From Proposition~\ref{numberbc} it follows that we can construct $\alpha$ by obtaining two bijections $\alpha|_R: R \rightarrow R'$ and $\alpha|_{\overline{R}}:\overline{R} \rightarrow \overline{R'}$, where $R= \bigcup_{\b_j \in \ms(\b)} \set(\b_j)$, $\overline{R}=[n] \setminus R$,  with the following steps ($R'$ and $\overline{R'}$ are defined in Step 2 below).

\begin{algorithm}\label{algo2}

\begin{enumerate}
\item[]
\item[Step 1] For every $l \in L_\ms$, select a subset $\ms'_l(\b) \subseteq \mc_l(\b)$ of cardinality $h_l:=|\ms_l(\b)|$ and obtain $\ms'(\b):=\bigcup_{l \in 	L_\ms} \ms'_l(\b)$ (the cycles in $\ms'(\b)$ will contain the images under $\a$ of points in cycles in  $\ms(\b)$. 
\item[Step 2] Construct a bijection $\a|_R: R \rightarrow R'$, where $R'= \bigcup_{\b_j \in \ms'(\b)} \set(\b_j)$, in such a way that $\a|_R$ and $\b|_R$ will not commute  on the desired points. 
\item[Step 3] Construct a bijection $\a|_{\overline{R}}: \overline{R} \rightarrow \overline{R'}$, where $\overline{R'}=[n] \setminus R'$, as any bijection that commutes with $\b|_{\overline{R}}$.
\end{enumerate}
\end{algorithm}
The construction in Step 3 can be done as follows: for every $l \in L_\ms$ (resp. $l \not \in L_\ms$), construct $\a|_{\overline{R}}$ in such a way that $\a$ transforms cycles in $\mc_l(\b) \setminus \ms_l(\b)$ into cycles in $\mc_l(\b) \setminus \ms'_l(\b)$ (resp.  that transforms cycles in $\mc_l(\b)$ into cycles in $\mc_l(\b)$). The more difficult part is to obtain all bijections $\a|_R$ in Step 2. Let $r_0$ denotes the number of ways to construct $\a|_R$ in Step 2 of Algorithm~\ref{algo2}. The following technical proposition will be useful to simplify some of the calculations in some of the enumerative results in this paper (and also shows that the main enumerative problem is to find $r_0$).

\begin{proposition}\label{partecentral}
Let  $H=\{\ell_1, \dots, \ell_i\}$ be a subset of $L(\b)$. The number of permutations $\a$ that does not commute with $\b$  on exactly $h_{\ell_1}$ $\ell_1$-cycles of $\b$, $h_{\ell_2}$ $\ell_2$-cycles of $\b$, \dots, $h_{\ell_i}$ $\ell_i$-cycles of $\b$ and that commutes with the rest of cycles of $\b$ is equal to 
\begin{equation*}
r_0  |C_{S_n}(\b)| \prod_{\ell \in H} \frac{1}{h_\ell ! \ell^{h_\ell}}{c_\ell \choose h_\ell} .
\end{equation*}
\end{proposition}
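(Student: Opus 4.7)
The plan is to enumerate $\a$ by running through Algorithm~\ref{algo2} together with a choice of which cycles of $\b$ fail to commute, and to verify that each admissible $\a$ arises from exactly one such sequence of choices. By Proposition~\ref{numberbc}, the data specifying $\a$ consist of: (i) a subset $\ms(\b)$ recording the cycles of $\b$ on which $\a$ fails to commute, which must contain $h_\ell$ cycles of length $\ell$ for each $\ell \in H$; (ii) a subset $\ms'(\b)$ of cycles of $\b$ of matching cycle-length multiset, containing the image $\a(R)$; (iii) the restriction $\a|_R\colon R \to R'$ from Step~2 of Algorithm~\ref{algo2}; and (iv) the restriction $\a|_{\overline{R}}\colon \overline{R} \to \overline{R'}$ from Step~3, which must commute with $\b|_{\overline{R}}$. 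Proposition~\ref{numberbc} guarantees that this correspondence between admissible $\a$ and quadruples $(\ms(\b),\ms'(\b),\a|_R,\a|_{\overline{R}})$ is a bijection, so no overcounting occurs.

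First I would count the choices for $\ms(\b)$ and $\ms'(\b)$ independently: each contributes a factor of $\prod_{\ell \in H}\binom{c_\ell}{h_\ell}$. Next, I would invoke the observation that the number $r_0$ of admissible Step-2 bijections depends only on the multiset of cycle lengths of $\ms(\b)$ and $\ms'(\b)$ and not on the particular cycles chosen; indeed, conjugation by a suitable element of $C_{S_n}(\b)$ maps any such pair of subsets to any other pair of the same cycle-type data while preserving the $k$-commuting condition on the restriction, so the Step-2 contribution is $r_0$ regardless of which cycles were selected.

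Third, I would count the Step-3 bijections $\a|_{\overline{R}}\colon\overline{R}\to\overline{R'}$ that commute with $\b$. Since both $\b|_{\overline{R}}$ and $\b|_{\overline{R'}}$ have exactly $c_\ell - h_\ell$ cycles of length $\ell$ for $\ell\in H$ and $c_\ell$ cycles of length $\ell$ for $\ell \notin H$, a standard enumeration (pair up matching cycles in $(c_\ell-h_\ell)!$ or $c_\ell!$ ways and choose an image for a distinguished point of each cycle in $\ell$ ways) shows that there are
\[
\prod_{\ell\in H}(c_\ell-h_\ell)!\,\ell^{c_\ell-h_\ell}\prod_{\ell\notin H}c_\ell!\,\ell^{c_\ell}
\]
such intertwining bijections.

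Multiplying the four contributions gives
\[
r_0 \prod_{\ell\in H}\binom{c_\ell}{h_\ell}^{2}(c_\ell-h_\ell)!\,\ell^{c_\ell-h_\ell}\prod_{\ell\notin H}c_\ell!\,\ell^{c_\ell}.
\]
Applying the identity $\binom{c_\ell}{h_\ell}(c_\ell-h_\ell)! = c_\ell!/h_\ell!$ to one copy of each binomial, and then recognizing $\prod_j c_j!\,j^{c_j} = |C_{S_n}(\b)|$, a short rearrangement rewrites this as $r_0\,|C_{S_n}(\b)|\prod_{\ell \in H}\frac{1}{h_\ell!\,\ell^{h_\ell}}\binom{c_\ell}{h_\ell}$, which is the asserted formula. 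The main obstacle is the symmetry argument establishing that $r_0$ is a well-defined invariant of the cycle-length data alone; once this is settled, the remainder is straightforward bookkeeping together with the algebraic simplification above.
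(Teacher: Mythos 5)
Your proposal is correct and follows essentially the same route as the paper: choose $\ms(\b)$ and $\ms'(\b)$ (each in $\prod_{\ell\in H}\binom{c_\ell}{h_\ell}$ ways), multiply by $r_0$ for Step 2 and by the count of intertwining bijections on the complement for Step 3, then simplify via $\binom{c_\ell}{h_\ell}(c_\ell-h_\ell)!=c_\ell!/h_\ell!$. Your added justification that $r_0$ depends only on the cycle-length data (via conjugation in $C_{S_n}(\b)$) and that the quadruple correspondence is a bijection makes explicit what the paper leaves implicit, but it is the same argument.
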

\begin{proof}
We will count the number of ways to select the set $\ms(\b)$ of all cycles of $\b$ where $\alpha$ and $\b$ will not commute and the set $\ms'(\b)$ in Step 1 of Algorithm~\ref{algo2}. Let $c_\ell$ denotes the number of $\ell$-cycles in the cycle decomposition of $\b$. For every $\ell \in H$, there are ${c_\ell \choose h_\ell}$ ways to select the $\ell$-cycles that will belong to  $\ms(\b)$ and ${c_\ell \choose h_\ell}$ ways to select the $\ell$-cycles that will belong to $\ms'(\b)$. Now we enumerate all bijections  $\a|_{\overline{R}}$ that can be obtained as in Step 3 of Algorithm~\ref{algo2}. Once we have selected  $\ms(\b)$ and $\ms'(\b)$, for every $l \in H$, there are $(c_\ell-h_\ell)!l^{c_\ell-h_\ell}$ ways to make that $\a$ transforms the $c_\ell-h_\ell$ cycles of length $\ell$ of $\b$ not belonging to  $\ms(\b)$ into the $c_\ell-h_\ell$ cycles of length $\ell$ of $\b$ not belonging to  $\ms'(\b)$. For $l \not \in H$, there are $|C_{S_n}(\b)|\prod_{l \in H}\frac{1}{l^{c_\ell}c_\ell!}$ ways to make that $\a$ transforms the $c_\ell$ $\ell$-cycles of $\b$ into $c_\ell$ $\ell$-cycles of $\b$. Then we have that the number of permutations $\a$ that satisfies the desired conditions is equal to
\begin{equation*}
r_0 \prod_{l \in H}{c_\ell \choose h_\ell}^2\prod_{l \in H}  (c_\ell-h_\ell)!l^{c_\ell-h_\ell} \left( |C_{S_n}(\b)|\prod_{l \in H}\frac{1}{l^{c_\ell}c_\ell!}\right)=r_0 |C_{S_n}(\b)|\prod_{l \in H}\frac{ 1}{h_\ell!l^{h_\ell}}{c_\ell \choose h_\ell}.
\end{equation*}

\end{proof}
Let $f(k)$ be the number of cyclic permutations ($k$-cycles) of $\{1, \dots , k\}$ with no $i \mapsto i+1\bmod k$ (see, e.g., \cite[exercise 8, p.\ 88]{stanley} or sequence A000757 in OEIS \cite{oeis}). 
 \begin{theorem} \label{onlymcycles}
Let $\b \in S_n$ be of type $(c_1, \dots , c_n)$. Let $k$ be an integer, $3 \leq k \leq n$. Then 
\begin{equation*}
c(\lambda_{k^{(1)}}, \b) =  |C_{S_n}(\b)| \sum_{\ell \geq k}^n c_\ell{\ell \choose k} f(k).
\end{equation*}
\end{theorem}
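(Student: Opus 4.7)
The plan is to partition $C(\lam_{k^{(1)}}, \b)$ by the length $\ell$ of the unique cycle $\b_j$ of $\b$ on which $\a$ fails to commute (necessarily $\ell \geq k$), invoke Proposition~\ref{partecentral} for each such $\ell$, and then compute the remaining local count using Theorem~\ref{1cycleblocks}.

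Fix $\ell$ with $k \leq \ell \leq n$. Applying Proposition~\ref{partecentral} with $H = \{\ell\}$ and $h_\ell = 1$, the number of permutations in $C(\lam_{k^{(1)}}, \b)$ whose unique non-commuting cycle is an $\ell$-cycle of $\b$ equals $r_0(\ell)\,|C_{S_n}(\b)|\,c_\ell/\ell$, where $r_0(\ell)$ counts the bijections $\a|_R\colon R\to R'$, with $\b_j$ and $\b_{j'}$ fixed $\ell$-cycles of $\b$ and $R=\set(\b_j)$, $R'=\set(\b_{j'})$, such that $\a$ $(k,\b)$-commutes with $\b_j$.

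To compute $r_0(\ell)$, Theorem~\ref{1cycleblocks} says that each such bijection corresponds to a cycle $\gamma := \a\b_j\a\inv = (P_1\dots P_k)$ on $\set(\b_{j'})$, where $\{P_1,\dots,P_k\}$ is a block partition of $\b_{j'}$ (the $P_i$'s must lie inside $\b_{j'}$ since they are pairwise disjoint blocks whose union is $\set(\b_{j'})$) and no string $P_i P_{i+1 \bmod k}$ is a block of $\b_{j'}$. I enumerate the valid $\gamma$'s in two stages: (i) block partitions of the $\ell$-cycle $\b_{j'}$ into $k$ nonempty blocks are in bijection with $k$-subsets of the $\ell$ cyclic gaps between consecutive points, yielding $\binom{\ell}{k}$ partitions; (ii) for each partition, cyclic orderings of its $k$ blocks satisfying the non-adjacency condition are in bijection with $k$-cycles $\tau \in S_k$ obeying $\tau(a) \neq a+1 \bmod k$ for all $a$, of which there are exactly $f(k)$ by definition. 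Finally, for each valid $\gamma$, the bijections $\a|_R$ with $\a \b_j \a\inv = \gamma$ form a coset of the centralizer of $\b_j$ in $\mathrm{Sym}(R)$ and hence number $\ell$. Therefore $r_0(\ell) = \ell \binom{\ell}{k} f(k)$.

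Summing over $\ell$ from $k$ to $n$,
\[
c(\lam_{k^{(1)}}, \b) = \sum_{\ell=k}^{n}\ell \binom{\ell}{k} f(k)\,|C_{S_n}(\b)|\,\frac{c_\ell}{\ell} = |C_{S_n}(\b)| \sum_{\ell \geq k}^{n} c_\ell \binom{\ell}{k} f(k),
\]
as claimed. The most delicate point is step (ii), where one must verify that the ``no two cyclically consecutive blocks adjacent in $\b_{j'}$'' condition matches precisely the defining condition of $f(k)$; once this bijection is set up, the remaining pieces reduce to a straightforward centralizer computation and the bookkeeping built into Proposition~\ref{partecentral}.
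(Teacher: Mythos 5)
Your proposal is correct and follows essentially the same route as the paper: the paper also fixes the length $\ell$ of the unique non-commuting cycle, obtains $r_0=\ell\binom{\ell}{k}f(k)$ (phrased there as the count of choices in Steps 2--4 of Algorithm 1, i.e.\ the $k$-subset of $\set(\b_{j'})$, the $k$-cycle $\tau$ with $\tau(a)\neq a+1 \bmod k$, and the first point of $B_1\dots B_k$), and then applies Proposition~\ref{partecentral} with $H=\{\ell\}$, $h_\ell=1$ before summing over $\ell$. Your reformulation of these three choices as a block partition via cyclic gaps, a non-adjacent cyclic ordering, and a coset of the centralizer of $\b_j$ in $\mathrm{Sym}(\set(\b_j))$ is an equivalent bookkeeping of the same count.
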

\begin{proof}
As all the  b.c.p. of $\a$ and $\b$ are in one $\ell$-cycle $\b_j=(b_1\;b_2 \; \dots  \;b_\ell)$ of $\b$, then the images under $\a$ of the b.c.p are in exactly one $\ell$-cycle $\b_{j'}=(b_1'\;b_2' \; \dots  \;b_\ell')$ of $\b$ (by Proposition~\ref{numberbc}). There are $\ell{\ell \choose k}f(k)$ ways to construct a bijection $\a|_{\set(\b_j)}: \set(\b_j) \rightarrow \set(\b_{j'})$ with steps 2 to 4 in Algorithm 1. Indeed, there are ${\ell \choose k}$ ways to choose the subset in Step 2; there are $f(k)$ ways to select the permutation $\tau$ in Step 3, and there are $\ell$ ways to select the first point in block $B_1 \dots  B_k$ in Step 4. By using Proposition~\ref{partecentral} (with $H=\{\ell\}$, $h_\ell=1$ and $r_0=\ell{\ell \choose k}f(k)$) and after summing over all possible lengths $\ell \geq k$  of cycles of $\b$ we have  
\begin{eqnarray*}
c(\lam_{k^{(1)}}, \b)= \sum_{\ell \geq k}^n  \ell{\ell \choose k}f(k) \frac{1}{\ell} c_\ell|C_{S_n}(\b)|=|C_{S_n}(\b)|  \sum_{\ell \geq k}^n c_\ell{\ell \choose k}f(k).
\end{eqnarray*}
 \end{proof}
Let $T(k, n)$ denote the number of permutations that $k$-commute with an $n$-cycle. 
\begin{corollary}\label{t1}
Let $n$ be a positive integer and $k$ and integer with $0 \leq k \leq n$.  Then
\[
T(k, n)  =  n{{n}\choose{k}}f(k).
\]
\end{corollary}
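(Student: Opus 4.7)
The plan is to derive Corollary~\ref{t1} as an immediate specialization of Theorem~\ref{onlymcycles}, together with the base cases handled by Proposition~\ref{12}. Let $\b$ be an $n$-cycle; then the cycle type of $\b$ is $(c_1,\dots,c_n)=(0,\dots,0,1)$, and it is a classical fact that $|C_{S_n}(\b)|=n$ (the centralizer of an $n$-cycle is the cyclic group it generates). These are the only two ingredients beyond the previous theorem that will be needed.

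For $3\le k\le n$, I would first argue that every permutation $\a\in S_n$ that $k$-commutes with $\b$ lies in $C(\lam_{k^{(1)}},\b)$. Indeed, $\b$ has a single cycle in its disjoint-cycle decomposition, so any b.c.p.\ of $\a$ and $\b$ must belong to that unique $n$-cycle. Hence $T(k,n)=c(\lam_{k^{(1)}},\b)$. Applying Theorem~\ref{onlymcycles} with $c_\ell=0$ for $\ell<n$ and $c_n=1$, the sum collapses to a single term and yields
\[
T(k,n)=c(\lam_{k^{(1)}},\b)=n\cdot\binom{n}{k}f(k),
\]
which is the claimed formula.

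It remains to verify the three small cases $k=0,1,2$. For $k=0$, Proposition~\ref{12} gives $c(0,\b)=|C_{S_n}(\b)|=n$, and on the right-hand side $n\binom{n}{0}f(0)=n$, using the natural convention $f(0)=1$ (the empty permutation trivially avoids the forbidden pattern). For $k=1$ and $k=2$, Proposition~\ref{12} says $c(1,\b)=c(2,\b)=0$; the right-hand side also vanishes since $f(1)=0$ (the only $1$-cycle sends $1\mapsto 1=1+1\bmod 1$) and $f(2)=0$ (the only $2$-cycle sends $1\mapsto 2$). Thus the formula $T(k,n)=n\binom{n}{k}f(k)$ holds for every $0\le k\le n$.

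There is no real obstacle here; the only mild subtlety is the observation that for a single-cycle $\b$ the partition-refined count $c(\lam_{k^{(1)}},\b)$ coincides with the full count $T(k,n)$, which removes the need to sum over partitions of $k$. The base cases just require checking that the formula behaves correctly under the standard conventions for $f(0), f(1), f(2)$.
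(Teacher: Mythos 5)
Your proof is correct and is essentially the paper's intended argument: the corollary is stated immediately after Theorem~\ref{onlymcycles} precisely as the specialization to an $n$-cycle, where $c_n=1$, $|C_{S_n}(\b)|=n$, and the single cycle forces $T(k,n)=c(\lam_{k^{(1)}},\b)$. Your explicit check of the cases $k=0,1,2$ (which the theorem's hypothesis $3\le k\le n$ does not cover) via Proposition~\ref{12} and the values $f(0)=1$, $f(1)=f(2)=0$ is a welcome bit of care that the paper leaves implicit.
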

The number $T(k, n)$ is now sequence A233440 in \cite{oeis}. With this corollary we can obtain in an easy way the binomial transform of sequence A000757. Let $A=\{f(0), f(1), \dots\}$ be sequence A000757,  and let $B=\{b_0, b_1, \dots\}$ be the binomial transform of $A$. In \cite{spivey} $b_n$ is defined as $\sum_{k=0}^n {{n} \choose{k}}f(k)$ that is equal to $\sum_{k=0}^n T(k, n)/n$ by Corollary~\ref{t1}. Then we have that  $b_n=(n-1)!$ because $\sum_{k=0}^nT(k, n)=n!$. No we present the following limit property for $T(k, n)$.

\begin{proposition}
Let $n$ be a positive integer and $m$ be a fixed nonnegative integer with $m\neq n$.  Then \[\lim_{n \to \infty} \frac{T(n-m,\;  n)}{n!}=\frac{e^{-1}}{m!}.\]
\end{proposition}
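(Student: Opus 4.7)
The plan is to reduce the claim to an asymptotic for $f(N)$ and evaluate the latter by inclusion-exclusion on forbidden adjacencies. By Corollary~\ref{t1}, $T(n-m, n) = n\binom{n}{n-m}f(n-m) = n\binom{n}{m}f(n-m)$, so
\[
\frac{T(n-m, n)}{n!} \;=\; \frac{n\, f(n-m)}{m!\,(n-m)!} \;=\; \frac{1}{m!}\cdot \frac{n}{n-m}\cdot \frac{f(n-m)}{(n-m-1)!}.
\]
Since $m$ is fixed and $n/(n-m) \to 1$ as $n \to \infty$, it suffices to establish $\lim_{N\to\infty} f(N)/(N-1)! = e^{-1}$.

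For this, let $A_i$ denote the set of $N$-cycles $\s$ satisfying $\s(i) = i+1 \bmod N$. The key observation is that for any $S \subseteq \{1, \ldots, N\}$ with $|S| = k < N$, the constraints $\{\s(i)=i+1 : i \in S\}$ specify a partial function whose underlying directed graph is a disjoint union of paths (it cannot form a full cycle, since $k<N$). Contracting each maximal path to a single labelled super-node leaves $N-k$ nodes to be arranged cyclically, which can be done in exactly $(N-k-1)!$ ways, independently of the particular $S$. For $k=N$ the unique extension is the cycle $(1\,2\,\ldots\,N)$ itself. Inclusion-exclusion therefore gives
\[
f(N) \;=\; \sum_{k=0}^{N-1} (-1)^k \binom{N}{k}(N-k-1)! + (-1)^N.
\]

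Dividing by $(N-1)!$ and simplifying $\binom{N}{k}(N-k-1)!/(N-1)! = N/\bigl(k!(N-k)\bigr)$ yields
\[
\frac{f(N)}{(N-1)!} \;=\; \sum_{k=0}^{N-1} \frac{(-1)^k}{k!}\cdot\frac{N}{N-k} \;+\; \frac{(-1)^N}{(N-1)!}.
\]
Writing $N/(N-k) = 1 + k/(N-k)$ decomposes the main sum as the truncation of $\sum_{k\geq 0}(-1)^k/k! \to e^{-1}$ plus a tail $R_N := \sum_{k=1}^{N-1} (-1)^k/\bigl((k-1)!(N-k)\bigr)$. Splitting $R_N$ at $k = \lfloor N/2 \rfloor$, the small-$k$ part is bounded by $(2/N)\sum_{j\geq 0} 1/j! = O(1/N)$, while the large-$k$ part contains at most $N/2$ terms, each of size $\leq 1/\lfloor N/2 \rfloor!$, hence is $O\bigl(N/\lfloor N/2 \rfloor!\bigr) \to 0$. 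Combined with $(-1)^N/(N-1)! \to 0$, this gives the desired limit $e^{-1}$. The main obstacle is the combinatorial count at the heart of the inclusion-exclusion: one must carefully justify that the number of $N$-cycles extending any prescribed set of $k<N$ adjacencies equals the constant $(N-k-1)!$, which hinges on the disjoint-paths structure of the constraints and the contraction argument sketched above.
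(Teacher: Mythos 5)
Your proposal is correct. The opening reduction is essentially identical to the paper's: both rewrite $T(n-m,n)/n!$ as $\tfrac{1}{m!}$ times (something tending to $1$) times $f(n-m)/(n-m-1)!$, so that everything hinges on $\lim_{N\to\infty} f(N)/(N-1)! = e^{-1}$. The difference is what happens next: the paper simply cites this limit from Stanley (exercise 8-e, p.~88), whereas you prove it from scratch. Your inclusion--exclusion count is right --- for $|S|=k<N$ the prescribed arcs $i\mapsto i+1$ form a disjoint union of directed paths (a proper subset of the arcs of the cycle $1\to 2\to\cdots\to N\to 1$), contracting the $N-k$ maximal paths gives exactly $(N-k-1)!$ extensions to an $N$-cycle, and the $k=N$ term contributes $(-1)^N$ --- and the resulting identity $f(N)=\sum_{k=0}^{N-1}(-1)^k\binom{N}{k}(N-k-1)!+(-1)^N$ checks out on small cases ($f(3)=f(4)=1$, $f(5)=8$). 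The subsequent estimate, splitting $N/(N-k)=1+k/(N-k)$ and bounding the correction term $R_N$ by cutting at $k=\lfloor N/2\rfloor$, is a clean and valid way to extract the limit $e^{-1}$. What your route buys is self-containedness: the asymptotic for A000757 is established rather than quoted, at the cost of about a page of extra work; the paper's route buys brevity by outsourcing exactly the step you identify as the heart of the matter.
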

\begin{proof}
\begin{eqnarray*}
T(n-m, n)/n!&=&n {n \choose n-m} f(n-m)/n!\\
&=& \frac{n}{m!(n-m)!}f(n-m)\\
&=& \frac{n}{m!}\frac{f(n-m)}{(n-m)(n-m-1)!}\\
&=& \frac{f(n-m)}{m!(n-m-1)!}+\frac{mf(n-m)}{m!(n-m)(n-m-1)!}.
\end{eqnarray*}
And the result follows by using that $\lim_{k \to \infty} f(k)/(k-1)!=e^{-1}$ (\cite[exercise 8-e, p.\ 88]{stanley}).
\end{proof}
If $m=0$ we have $\lim_{n \to \infty} \frac{T(n,\; n)}{n!}=e^{-1}$. Now we present a bivariate generating function for $T(k, n)$.

\begin{theorem}\label{nciclo}
Let $n, k$ be  positive integers with $k \leq n$. Then
\[\sum_{n,k} T(k, n)\frac{z^n}{n!}u^k=ze^{z(1-u)}\Big(\big(1-\log(1-zu)\big)\big(1-u\big)+\frac{u}{1-zu}\Big)\]
\end{theorem}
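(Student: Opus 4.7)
My plan is to start from the closed formula $T(k,n)=n\binom{n}{k}f(k)$ given by Corollary~\ref{t1} and reduce the bivariate generating function to a simple expression in the exponential generating function $F(x):=\sum_{k\geq 0}f(k)\,x^k/k!$ of the sequence A000757. The key preparation is that the identity $b_n=\sum_{k=0}^n\binom{n}{k}f(k)=(n-1)!$ (already observed in the paragraph following Corollary~\ref{t1}, using $f(0)=1$) is precisely a binomial transform, so in egf form it says $e^z F(z)=\sum_{n\geq 0}b_n z^n/n!=1-\log(1-z)$, hence
\[
F(z)=e^{-z}\bigl(1-\log(1-z)\bigr).
\]

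First I would interchange the order of summation and extract the inner sum in $n$:
\begin{equation*}
\sum_{n,k}T(k,n)\frac{z^n}{n!}u^k
=\sum_{k\geq 0}f(k)u^k\sum_{n\geq k}\frac{n\,z^n}{k!(n-k)!}.
\end{equation*}
A shift $m=n-k$ reduces the inner sum to $\frac{z^k}{k!}\sum_{m\geq 0}(m+k)z^m/m!=\frac{z^k(z+k)e^z}{k!}$. Substituting back and splitting the factor $(z+k)$ gives
\[
\sum_{n,k}T(k,n)\frac{z^n}{n!}u^k=e^z\Bigl[z\,F(zu)+zu\,F'(zu)\Bigr],
\]
since $\sum_k f(k)\,k\,(zu)^k/k!=zu\,F'(zu)$.

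The rest is a mechanical simplification: compute
\[
F'(x)=-e^{-x}\bigl(1-\log(1-x)\bigr)+\frac{e^{-x}}{1-x},
\]
substitute into $zF(zu)+zuF'(zu)$, and collect the coefficient of $(1-\log(1-zu))$, which yields the factor $(1-u)$, while the remaining term is $zu/(1-zu)$. Factoring out $ze^{-zu}$ and combining with the outer $e^z$ produces $ze^{z(1-u)}$ in front of $(1-u)(1-\log(1-zu))+u/(1-zu)$, which is exactly the stated expression.

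I do not expect a genuine obstacle: the only non-routine input is identifying the egf of $f(k)$, and the paper has already supplied the binomial transform identity that delivers it in one line. The rest is bookkeeping of the derivative of a product and a careful grouping of terms.
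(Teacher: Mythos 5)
Your proposal is correct and follows essentially the same route as the paper: both first sum over $n$ to obtain $f(k)e^z z^k(z+k)/k!$, then sum over $k$ using the exponential generating function $e^{-x}(1-\log(1-x))$ of A000757 together with the series for $k f(k)$ (which the paper handles via Wilf's Rule (2') and you handle via $xF'(x)$, the same thing). The only cosmetic difference is that you rederive the EGF of $f$ from the binomial-transform identity $b_n=(n-1)!$ rather than quoting Stanley, which is a legitimate shortcut since the paper establishes that identity independently.
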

\begin{proof}
Let $g^{\langle k \rangle }(z)=\sum_n g_{n, k} \frac{z^n}{n!}$ denotes the vertical generating function (exponential case) of the sequence $\{g_{n, k}\}$. Let $c_{n, k}= T(k, n)/n={n \choose k} f(k)$. From Example III. 1,  in \cite[p. 155]{ac}, and by using the fact that function $f(k)$ is independent of $n$ we have
\[
c^{\langle k \rangle}(z)=\sum_n {n \choose k} f(k)\frac{z^n}{n!}=f(k) \frac{e^zz^k}{k!}
\]
Now, by using Rule (2') in \cite[p.\ 41] {wilf} we obtain 
\[
\sum_n n {n \choose k} f(k)\frac{z^n}{n!}=f(k) z\Big(\frac{e^zz^k}{k!}+\frac{e^zz^kk}{zk!}\Big)
\]
Now 
\begin{eqnarray*}
P(z, u)&:=&\sum_{k, n}n{n \choose k}f(k) \frac{z^n}{n!}u^k\\
&=&\sum_k \Big(\sum_n n{n \choose k} f(k)\frac{z^n}{n!}\Big)u^k\\
&=&\sum_k  f(k) z\Big(\frac{e^zz^k}{k!}+\frac{e^zz^kk}{zk!}\Big)u^k\\
&=&\sum_k  f(k) z\Big(\frac{e^zz^k}{k!}\Big)u^k+\sum_k  f(k) z\frac{e^zz^kk}{zk!}u^k\hspace{1cm} \\
&=&ze^z\sum_k  f(k) \frac{z^ku^k}{k!}+e^z\sum_k k f(k) \frac{z^ku^k}{k!}.
\end{eqnarray*}
It is known that $\sum_{k\geq 0}   f(k) \frac{x^k}{k!}=e^{-x}(1-\log(1-x))$ (see, e.g., \cite[exercise 8, p.\ 88]{stanley}), then  
\[
ze^z\sum_k  f(k) \frac{z^ku^k}{k!}=ze^z(e^{-zu}(1-\log(1-zu)).
\]
Now, we apply Rule (2') in \cite{wilf} to the second term of $P(z, u)$ to obtain
\[
e^z\sum_k k f(k) \frac{z^ku^k}{k!}=e^z(zu)e^{-zu}\Big(\frac{1}{1-zu}-(1-\log(1-zu))\Big),
\]
and the result follows after some algebraic manipulations.

\end{proof}

\subsection{On the number $c(\lam_{k^{(k)}},\b)$}
In some cases, the number $c(\lambda_k, \b)$ can be zero as shows the following  

\begin{proposition}\label{c11}
Let $\b \in S_n$ and $k$ be a positive integer. Then $c(\lam_{k^{(k)}},\b)=0.$
\end{proposition}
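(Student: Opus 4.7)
My plan is to argue by contradiction. Suppose some $\a \in C(\lam_{k^{(k)}}, \b)$ exists, so the cycle decomposition of $\b$ contains exactly $k$ ``bad'' cycles $\b_1, \dots, \b_k$ on each of which $\a$ $(1,\b)$-commutes with $\b$, while $\a$ commutes with $\b$ on every other (``good'') cycle. The central idea is to track a map $\phi$ induced by $\a$ on the set of cycles of $\b$: for a good cycle $\b_j$, Proposition~\ref{transform} lets me set $\phi(\b_j) := \a\b_j\a^{-1}$, a cycle of $\b$ of the same length; for a bad cycle $\b_i$, part (1) of Theorem~\ref{1cycleblocks} tells me that $\a\b_i\a^{-1}$ is a proper block of some cycle of $\b$, which I call $\phi(\b_i)$, and necessarily $|\phi(\b_i)| > |\b_i|$. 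In both cases $\set(\phi(\b_j)) \supseteq \a(\set(\b_j))$.

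Next I would partition the cycles of $\b$ into two sets: $G$, consisting of those $\b'$ that arise as $\phi(\b_j)$ for some good $\b_j$, and $B$, its complement. Since $\a$ is a bijection of $[n]$ and distinct cycles of $\b$ have disjoint underlying sets, $\phi$ is injective on good cycles, so $|G|$ equals the number of good cycles, forcing $|B| = k$. The key structural fact to establish is then: for every $\b' \in B$, the preimage $\phi^{-1}(\b')$ consists of at least two bad cycles. No good cycle can lie in $\phi^{-1}(\b')$ by the definition of $G$; and no bad cycle $\b_i$ can satisfy $\phi(\b_i) \in G$, since otherwise $\phi(\b_i) = \phi(\b_j)$ for some good $\b_j$ would give $\a(\set(\b_i)) \subsetneq \set(\phi(\b_j)) = \a(\set(\b_j))$, forcing $\set(\b_i) \cap \set(\b_j) \neq \emptyset$ by injectivity of $\a$ and contradicting disjointness of distinct cycles. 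Bijectivity of $\a$ then forces the bad cycles in $\phi^{-1}(\b')$ to disjointly tile $\set(\b')$, giving $|\b'| = \sum_{\b_i \in \phi^{-1}(\b')}|\b_i|$ with each $|\b_i| < |\b'|$, which requires at least two summands.

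The final step is an immediate counting collapse: since each bad cycle lies in $\phi^{-1}(\b')$ for exactly one $\b' \in B$, summing yields
\[
k \;=\; \sum_{\b' \in B} |\phi^{-1}(\b')| \;\geq\; 2|B| \;=\; 2k,
\]
so $k \leq 0$, contradicting $k \geq 1$. The main obstacle I anticipate is making rigorous the claim that bad cycles map only into $B$ (never into $G$); the rest is the injectivity-based argument above, after which the contradiction is automatic.
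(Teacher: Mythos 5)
Your proof is correct, but it takes a genuinely different route from the paper's. The paper deduces Proposition~\ref{c11} from the stronger local statement Proposition~\ref{notone2} (if some cycle of $\b$ carries exactly one b.c.p., then some cycle carries at least two), which it proves by induction on the length $l$ of the cycle carrying the single b.c.p.: the base case $l=1$ traces the preimage of a fixed point into a longer cycle, and the inductive step finds an $l$-cycle containing a point whose $\a$-preimage lies in a cycle of different length, followed by a two-case analysis. You instead run a global double count on the map $\phi$ induced by $\a$ on the cycles of $\b$: good cycles biject onto the cycles of $G$ (Proposition~\ref{transform}), each bad cycle is sent, via part (1) of Theorem~\ref{1cycleblocks}, as a \emph{proper} block into a strictly longer cycle necessarily lying in $B$, and the disjointness/length bookkeeping forces every $\b'\in B$ to have a nonempty fiber with no singleton (a single bad preimage would give $|\b_i|=|\b'|$, contradicting properness), whence $k\ge 2|B|=2k$. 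The steps you flag as delicate do go through exactly as sketched. What your route buys is brevity --- no induction and no case split --- and in fact a cleaner byproduct: your argument shows directly that a non-commuting pair cannot have all of its non-commuting cycles carrying exactly one b.c.p. each, which subsumes the role Proposition~\ref{notone2} plays in the paper. What the paper's route buys is the explicit intermediate statement~\ref{notone2} itself, proved constructively by exhibiting where the second b.c.p. must appear, which gives somewhat more structural information about how the bad points propagate through the cycles.
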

This result is a direct consequence of the following. 

\begin{proposition}\label{notone2}
Let $\a, \b \in S_n$.  If one cycle of $\b$ has exactly one b.c.p. of $\a$ and $\b$, then there exist a cycle of $\b$ that contains at least two b.c.p. of $\a$ and $\b$. 
\end{proposition}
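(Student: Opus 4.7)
The plan is to argue by contradiction: assume that every cycle of $\b$ contains either $0$ or exactly $1$ b.c.p.\ of $\a$ and $\b$, and that at least one cycle of $\b$ contains exactly $1$ b.c.p.\ (so that the set of cycles with b.c.p.\ is nonempty). Let $\b_1,\dots,\b_r$ be the cycles of $\b$ with exactly one b.c.p., of lengths $l_1,\dots,l_r$ respectively. I will show this set-up is inconsistent, which forces some cycle of $\b$ to carry at least two b.c.p.

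The first key step is to use Theorem~\ref{1cycleblocks}(1): since $\a$ $(1,\b)$-commutes with each $\b_i$, we have $\a\b_i\a^{-1}=(P^{(i)})$, where $P^{(i)}$ is a \emph{proper} block in some cycle of $\b$. Thus $\a(\set(\b_i))=\set(\a\b_i\a^{-1})$ is contained in $\set(\b_{m_i})$ for some cycle $\b_{m_i}$ of $\b$ whose length is \emph{strictly greater} than $l_i$. This gives the inequality $|\b_{m_i}|>l_i$ for every $i\in[r]$.

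The second key step is to invoke Proposition~\ref{numberbc}: since $\b_1,\dots,\b_r$ are precisely the cycles of $\b$ on which $\a$ does not commute with $\b$, there exist exactly $r$ cycles $\b_1',\dots,\b_r'$ of $\b$ with the same multiset of lengths $\{l_1,\dots,l_r\}$ such that
\[
\a\!\left(\bigcup_{i=1}^r\set(\b_i)\right)=\bigcup_{i=1}^r\set(\b_i').
\]
Because each $\a(\set(\b_i))$ is contained in the single cycle $\b_{m_i}$, and the left-hand side above is a disjoint union of complete cycles of $\b$ (namely the $\b_j'$'s), each $\b_{m_i}$ must coincide with one of the cycles $\b_1',\dots,\b_r'$. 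In particular the length $|\b_{m_i}|$ belongs to the multiset $\{l_1,\dots,l_r\}$.

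The final step is the contradiction by a maximum argument: pick $j$ with $l_j=\max\{l_1,\dots,l_r\}=:L$. From step one, $|\b_{m_j}|>L$, but from step two, $|\b_{m_j}|\in\{l_1,\dots,l_r\}$ whose maximum is $L$, which is impossible. Hence some cycle of $\b$ must contain at least two b.c.p. The main (only) obstacle is the bookkeeping in step two, making sure the multiset of lengths of the target cycles exactly matches that of the source cycles; this is precisely what Proposition~\ref{numberbc} delivers, so the argument is clean once that result is on hand.
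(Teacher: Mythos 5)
Your proof is correct, but it takes a genuinely different route from the paper's. The paper proves the proposition by induction on the length $l$ of the cycle carrying exactly one b.c.p.: after a base case $l=1$, the inductive step uses Theorem~\ref{1cycleblocks} to show that the image of that cycle sits as a proper block inside a strictly longer cycle, then locates an $l$-cycle some of whose preimages lie in a cycle of different length, and splits into two subcases, one of which produces a cycle with at least two b.c.p.\ directly and the other of which produces a shorter cycle with exactly one b.c.p., to which the inductive hypothesis applies. You instead assume for contradiction that every cycle carries at most one b.c.p., observe (same key input, Theorem~\ref{1cycleblocks}(1)) that each cycle $\b_i$ with exactly one b.c.p.\ satisfies $\a(\set(\b_i))\subseteq\set(\b_{m_i})$ with $|\b_{m_i}|>l_i$, use Proposition~\ref{numberbc} plus disjointness of cycles to force each $\b_{m_i}$ to be one of the target cycles whose lengths form the same multiset $\{l_1,\dots,l_r\}$, and then derive a contradiction at the cycle of maximal length. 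Both arguments rest on the same two lemmas, but your extremal argument replaces the paper's induction and its case analysis with a single maximality contradiction; it is shorter and, in my view, cleaner, since it never needs to chase preimages across cycles of different lengths. The one step worth making fully explicit is the identification $\b_{m_i}=\b_j'$, which follows because $\a(\set(\b_i))$ is a nonempty subset of both $\set(\b_{m_i})$ and the union $\bigcup_j\set(\b_j')$, and distinct cycles of $\b$ have disjoint supports; you state this correctly.
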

\begin{proof}

The proof is by induction on the length $l$ of the cycle $\b_1$ of $\b$ which contains exactly one b.c.p of $\a$ and $\b$. First we prove the case $l=1$. By hypothesis, $\b$ has a fixed point that is a  b.c.p. of $\a$ and $\b$ and then, by Proposition~\ref{numberbc}, there exists one fixed point, say $x'$, of $\b$ whose preimage under $\a$, $\a\inv(x')$, is a b.c.p. From Proposition~\ref{transform} it follows that $\a\inv(x')$ is a point in a cycle $\b_{j}$ of $\b$ of length greater than one. Then $\b_{j}=(\a\inv(x')B)$ with $B$ a block of length $|B|\geq 1$. Therefore, $\a \b_{j} \a\inv=\big(x'B'\big)$, and, by Theorem~\ref{1cycleblocks}, $\b_{j}$ has at least two b.c.p. of $\a$ and $\b$. 

Now we prove the case $l >1$. Let $\b_1=(d_1 \dots d_l)$ be a cycle of $\b$ with exactly one b.c.p. of $\a$ and $\b$,  that without lost of generality we can suppose that is $d_l$.  Assume by induction that the statement of the proposition is true for $r$-cycles of $\b$ which contains exactly one b.c.p of $\a$ and $\b$ with $r <l$ (notice that in general, it could be the case that no such cycles in $\b$ exist).  
 
Let $\set(\mc_l(\b))=\bigcup_{\b_j \in \mc_l(\b)}\set(\b_j)$. Let $c_l$ denote the number of $l$-cycles of $\b$. By Theorem~\ref{1cycleblocks} we have that $\a \b_1\a^{-1}=(D)$ where $D=\a(d_1) \dots  \a(d_l)$ is a proper block in an $s$-cycle of $\b$, with $s >l$, which implies that $\a^{-1}\big(\set(\mc_l(\b))\big) \neq \set(\mc_l(\b))$. Then, there exist at least one $l$-cycle $\b_2=(a_1\dots a_l)$ of $\b$ (with the possibility that $\b_2=\b_1$) which contains at least one point, says $a_1$, which has its preimage under $\a$ in one $m$-cycle, say $\b_3$, of length different than $l$. Let $r$ be an integer between $1$ and $l$ such that $\a\inv(a_1) \dots  \a\inv(a_r)$ is a block $b_1 \dots  b_r$ in $\b_3=(b_1 \dots  b_{m})$ of $\b$, with  $b_i=\a \inv(a_i)$, for $1\leq i \leq r$, and such that $\a\inv(a_1) \dots  \a\inv(a_r)\a\inv(a_{r+1 \bmod l})$ is not a block in $\b_3$. We have the following cases:\\
{\it Case I.} If $m >r$, then $\b_3$ has at least two b.c.p. of $\a$ and $\b$ (by Theorem~\ref{1cycleblocks}). \\
{\it Case II.} If $m=r$ then $r < l$ (because $m \neq l$ and $1\leq r \leq l$) and $\a \b_3 \a^{-1}=(a_1 \dots  a_m)$. As  $a_1 \dots  a_m$ is a proper block in $\b_2$ then $\b_3$ has exactly one b.c.p. of $\a$ and $\b$ (by Theorem~\ref{1cycleblocks}).  And it follows from  the inductive hypothesis that $\b$ has a cycle with at least two b.c.p. of $\a$ and $\b$.  
\end{proof}
\section{The number $c(k, \b)$ for $k=3, 4$}\label{ckb34}
In this section we present formulas for the number $c(k, \b)$, when $\b$ is any permutation of cycle type $(c_1, \dots , c_n)$ and $k=3, 4$. 
\begin{theorem}\label{formulak=3}
Let $\b$ be any $n$-permutation of type $(c_1, \dots , c_n)$. Then 
\begin{equation*}
c(3, \b)=\left(\sum_{\ell \geq 3}^n  c_\ell {{\ell}\choose{3}}+\sum_{1 \leq \ell <m\leq n}    \ell mc_\ell c_m\right)|C_{S_n}(\b)|.
\end{equation*} 
\end{theorem}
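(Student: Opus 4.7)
The plan is to decompose the count according to the integer partition $\lambda$ of $3$ recording how the three bad commuting points of $\a$ and $\b$ distribute among the cycles of $\b$. The partitions are $[3]$, $[2,1]$, and $[1,1,1]$, so
\[
c(3,\b) \;=\; c(\lambda_{3^{(1)}},\b) \;+\; c([2,1],\b) \;+\; c(\lambda_{3^{(3)}},\b).
\]
Proposition~\ref{c11} makes the last term vanish, and Theorem~\ref{onlymcycles} handles the first: a direct check (the only $3$-cycle in $S_3$ avoiding $i\mapsto i+1\bmod 3$ is $(1\,3\,2)$) gives $f(3)=1$, so $c(\lambda_{3^{(1)}},\b)=|C_{S_n}(\b)|\sum_{\ell\geq 3}^{n}c_\ell\binom{\ell}{3}$, which is the first summand of the theorem.

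The main work lies in computing $c([2,1],\b)$, for which I would invoke Proposition~\ref{partecentral} with $H=\{m,\ell\}$ and $h_m=h_\ell=1$. Fix an $m$-cycle $\b_1$ and an $\ell$-cycle $\b_2$ on which $\a$ has $2$ and $1$ b.c.p.\ respectively, and fix target cycles $\b_1',\b_2'$ (of lengths $m,\ell$) supplied by Proposition~\ref{numberbc}; it remains to determine the number $r_0$ of bijections $\a|_R:\set(\b_1)\cup\set(\b_2)\to\set(\b_1')\cup\set(\b_2')$ satisfying the commutation constraints. By Theorem~\ref{1cycleblocks}(1), $\a\b_2\a^{-1}=(Q)$ where $Q$ is a proper block of length $\ell$ in a single cycle of $\b$. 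Since $\set(Q)\subseteq\set(\b_1')\cup\set(\b_2')$ and $Q$ cannot be a proper block inside the $\ell$-cycle $\b_2'$, $Q$ must lie in $\b_1'$, forcing $\ell<m$.

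By Theorem~\ref{1cycleblocks}(2), $\a\b_1\a^{-1}=(P_1P_2)$ with $P_1,P_2$ disjoint blocks partitioning $(\set(\b_1')\setminus\set(Q))\cup\set(\b_2')$; a cardinality argument (no block can straddle two cycles of $\b$) shows $\{P_1,P_2\}$ must equal $\{Q^c,\b_2'\}$, where $Q^c$ is the complementary length-$(m-\ell)$ block of $Q$ in $\b_1'$ and $\b_2'$ appears as an improper block. The non-block conditions on $P_1P_2$ and $P_2P_1$ are automatic because the two blocks live in distinct cycles of $\b$. The counting then splits as $m$ choices for $Q$, $\ell$ bijections $\a|_{\set(\b_2)}$ realizing $\a\b_2\a^{-1}=(Q)$, and $\ell\cdot m$ bijections $\a|_{\set(\b_1)}$ ($\ell$ cyclic rotations for $\b_2'$ as an improper block, times $m$ choices for the image of the designated point $b_{1,1}$), giving $r_0=m^2\ell^2$. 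This structural step (pinning the decomposition $\{P_1,P_2\}=\{Q^c,\b_2'\}$) is the main obstacle.

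Substituting into Proposition~\ref{partecentral} yields
\[
c([2,1]_{m,\ell},\b) \;=\; m^2\ell^2\cdot|C_{S_n}(\b)|\cdot\tfrac{c_m}{m}\cdot\tfrac{c_\ell}{\ell} \;=\; m\,\ell\, c_m c_\ell\,|C_{S_n}(\b)|,
\]
and summing over $1\leq\ell<m\leq n$ gives the second summand of the theorem. Adding the two non-vanishing contributions produces the stated formula.
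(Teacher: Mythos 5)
Your proposal is correct and follows essentially the same route as the paper: the decomposition $c(3,\b)=c(\lambda_{3^{(1)}},\b)+c([2,1],\b)$ via Proposition~\ref{c11} and Theorem~\ref{onlymcycles}, and then a block analysis of the $[2,1]$ case forcing the image configuration (your $\{Q^c,\b_2'\}$ is the paper's $\b_1'=(X_2)$, $\b_2'=(X_1X_3)$ up to relabeling) with $r_0=(\ell m)^2$ fed into Proposition~\ref{partecentral}. Your count of $r_0$ and the resulting summand $\ell m c_\ell c_m|C_{S_n}(\b)|$ agree exactly with the paper's.
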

\begin{proof}
By Proposition~\ref{c11} we have that $c(\lam_{3^{(3)}}, \b)=0$. Then $c(3, \b)=c(\lam_{3^{(1)}}, \b) +c([2,1], \b)$. The case  $c(\lam_{3^{(1)}}, \b)$ follows from Lemma~\ref{onlymcycles}. In order to obtain $c([2, 1],  \b)$, we construct all permutations $\a$ that $3$-commute with $\b$ and such that $\b$ has a unique $\ell$-cycle (resp. $m$-cycle), say $\b_1$ (resp. $\b_2$), where $\a$  ($1, \b$)-commutes with $\b_1$ (resp. ($2, \b)$-commutes with $\b_2$). By Proposition~\ref{numberbc} there exist exactly one  $\ell$-cycle $\b'_1$ and exactly one $m$-cycle $\b'_2$ such that $\a\left(\set(\b_1) \cup \set(\b_2)\right)=\set(\b_1') \cup \set(\b_2')$. From Theorem~\ref{1cycleblocks} 
we have that
\begin{equation}\label{eq3-1}
\a|_{\set(\b_1)\cup \set(\b_2)}=\left(
\begin{array}{cccccccc}
A_1   \\
X_1   
 \end{array}\right)\left(
\begin{array}{cccccccc}
  B_1 & B_2  \\
 X_2 &X_3  
 \end{array}\right),
\end{equation}
where 
\begin{enumerate}
\item[a)] $\b_1=(A_1)$, $\b_2=(B_1B_2)$, and $X_1, X_2, X_3$ are blocks in $\b'_1$ and $\b'_2$.
 \item[b)] The strings $X_2X_3$ and $X_3X_2$ are not blocks in any cycle of $\b$, and $X_1$ is a block in a cycle of length greater that $A_1$, i.e., $X_1$ is not a block in $\b_1'$.
 \item[c)] The set of all points in $X_1, X_2, X_3$ should be equal  to $\set(\b'_1)\cup \set(\b'_2)$. 
\end{enumerate}
From conditions a) to c) above we have that $X_2$ and $X_3$ belongs to different cycles. Without lost of generality we can assume that $\b_1'=(X_2)$ and that $\b_2'=(X_1X3)$. Now we count the number of ways to construct  $\a|_{\set(\b_1)\cup \set(\b_2)}$ using notation~(\ref{eq3-1}). There are  $\ell$ ways to select the first point of block $A_1$ and there are $m$ ways to select the first point of block $B_1B2$. There are $\ell$ ways to select the first point of block $X_2$ and there are $m$ ways to select the first point of  block $X_1$ (after this selection the first point of block $X_3$ is uniquely determined). Now by Proposition~\ref{partecentral} (with $H=\{\ell, m\}$, $h_\ell=1, h_m=1$, $r_0=(\ell m)^2$) we have $\ell mc_\ell c_m|\cen(\b)|$ for this $\ell$ and $m$. The result is obtained after we sum over all possible values of $\ell$ and $m$.
\end{proof}
\begin{theorem}\label{casek=4}
Let $\b$ be any permutation of cycle type $(c_1, \dots , c_n)$. Then 
\[
c(4, \b)=c(\lam_{4^{(1)}}, \b) +c([3, 1], \b)+c([2,2], \b)+c([2, 1,1], \b), 
\]
where 
\begin{eqnarray*}
c(\lam_{4^{(1)}}, \b) &=&|C_{S_n}(\b)| \sum_{i \geq 4}c_i {i \choose 4};\\
c([3, 1], \b)&=&|C_{S_n}(\b)| \sum_{i \geq 1, j \geq i+2} ij(j-i-1)c_ic_j;\\
c([2, 2], \b)&=&|\cen(\b)|\Big(\sum_{i\geq 2}i{i \choose 2}{c_i \choose 2}+\sum_{j>i\geq 2}i(i-1)jc_ic_j\Big);\\
c([2, 1,1], \b)&=&|\cen(\b)|\Big(\sum_{i \geq 1} i^3c_{2i}{c_i \choose 2} + \sum_{j>i\geq 1}ij(i+j)c_ic_jc_{i+j} \Big).
\end{eqnarray*}

\end{theorem}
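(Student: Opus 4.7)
The plan is to decompose $c(4,\b)=\sum_\lam c(\lam,\b)$ over integer partitions $\lam$ of $4$. By Proposition~\ref{c11} the summand for $\lam=[1,1,1,1]$ vanishes, and the case $\lam=[4]=\lam_{4^{(1)}}$ is an immediate instance of Theorem~\ref{onlymcycles} with $k=4$ after checking $f(4)=1$ (the only $4$-cycle on $\{1,2,3,4\}$ avoiding $i\mapsto i+1\bmod 4$ is $(1\;4\;3\;2)$). This leaves the three partitions $[3,1]$, $[2,2]$, $[2,1,1]$.

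For each of the remaining partitions I will apply the template used for $c([2,1],\b)$ in the proof of Theorem~\ref{formulak=3}: fix a choice of bad cycles $\b_1,\ldots,\b_h$ and image cycles $\b_1',\ldots,\b_h'$ of the prescribed multiset of lengths, count the number $r_0$ of bijections $\a|_R\colon\set(\b_1)\cup\cdots\cup\set(\b_h)\to\set(\b_1')\cup\cdots\cup\set(\b_h')$ satisfying the $(k_r,\b)$-commuting requirement on each $\b_r$, and then invoke Proposition~\ref{partecentral} with the corresponding multiset $H$ of lengths and multiplicities $h_\ell$. By Theorem~\ref{1cycleblocks}, $\a\b_r\a^{-1}$ decomposes as an ordered product of $k_r$ blocks in cycles of $\b$ satisfying the nonadjacency condition, and the count $r_0$ factors as (i) the admissible distributions of these blocks among the image cycles, times (ii) the cyclic arrangements of the $k_r$ blocks, times (iii) the $\ell$ starting-point writings of any image block that equals the improper block of an image $\ell$-cycle, times (iv) the cyclic shifts aligning each $\b_r$ to its conjugate target.

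For $\lam=[3,1]$, with bad lengths $i$ (the $(1,\b)$-commuting cycle) and $j$ (the $(3,\b)$-commuting one), Theorem~\ref{1cycleblocks}(1) forces the image of the $i$-cycle to be a proper length-$i$ block inside the image $j$-cycle, so $j>i$; a short case check then shows the only cyclic arrangement of the three blocks of $\a\b_2\a^{-1}$ satisfying the nonadjacency condition is $(L_1\,\b_1'\,L_2)$, with $L_1,L_2$ the two parts of an ordered split of the $(j-i)$-point leftover of the image $j$-cycle, requiring $j\geq i+2$. Multiplying the $j$ positions of the proper block in $\b_2'$, the $i$ alignments of $\b_1$, the $j-i-1$ split points of the leftover, the $i$ starting-point writings of the improper block $\b_1'$, and the $j$ alignments of $\b_2$ yields $r_0=i^2 j^2(j-i-1)$. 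Analogous enumerations give $r_0=i^4(i-1)$ and $r_0=i^2(i-1)j^2$ for the equal- and distinct-length subcases of $[2,2]$, and $r_0=4i^6$ and $r_0=(i+j)^2 i^2 j^2$ for $[2,1,1]$; here bijectivity combined with Theorem~\ref{1cycleblocks}(1) applied to each $(1,\b)$-commuting cycle pins the $(2,\b)$-commuting cycle's length to $2i$ or $i+j$ respectively, and its image decomposes as the two improper blocks of the two smaller image cycles. In each case Proposition~\ref{partecentral} with the appropriate $H$ converts $r_0$ into the displayed summand.

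The main obstacle is the case analysis at step~(i): for $[2,2]$ and especially $[2,1,1]$ the bad cycles share image regions, and one must rule out every configuration in which two blocks partitioning a single image cycle occur adjacently in some cyclic arrangement, since their concatenation would be the improper block of that cycle and would violate nonadjacency. An equally easy-to-miss contribution is the factor of $\ell$ coming from the $\ell$ starting-point writings of an improper block of an image $\ell$-cycle; it is precisely these factors that match the powers of cycle lengths appearing in the stated formula.
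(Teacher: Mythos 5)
Your proposal is correct and follows essentially the same route as the paper: decompose over the partitions $[4]$, $[3,1]$, $[2,2]$, $[2,1,1]$ (with $[1,1,1,1]$ eliminated by Proposition~\ref{c11} and $[4]$ handled by Theorem~\ref{onlymcycles}), determine the forced block configurations via Theorem~\ref{1cycleblocks} and Proposition~\ref{numberbc}, and convert each count $r_0$ into a summand via Proposition~\ref{partecentral}. All of your $r_0$ values ($i^2j^2(j-i-1)$ for $[3,1]$; $i^4(i-1)$ and $i^2(i-1)j^2$ for $[2,2]$; $4i^6$ and $(i+j)^2i^2j^2$ for $[2,1,1]$) agree with those computed in the paper's proof.
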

\begin{proof}
From Proposition~\ref{c11} it follows that $c(\lam_{4^{(4)}},\b)=0$ and $c(\lam_{4^{(1)}}, \b)$ is obtained by Theorem~\ref{onlymcycles}.  We divide the rest of the proof into three parts.

{\bf Part A:} The number  $c([2, 1,1], \b)$

  Let $l_1, l_2, l_3$ denote the lengths of the cycles $\b_1, \b_2, \b_3$ of $\b$, respectively, that contains exactly two b.c.p. of $\a$ and $\b$, exactly one b.c.p., and exactly one b.c.p., respectively.  By Proposition~\ref{numberbc} $\b$ has exactly three cycles, say $\b_1', \b_2', \b_3'$, with $|\b_i'|=|\b_i|$, for $i=1, 2, 3$, such that $\a\left(\set(\b_1)\cup \set(\b_2)\cup \set(\b_3)\right)=\set(\b_1')\cup \set(\b_2')\cup \set(\b_3')$.  From Theorem~\ref{1cycleblocks} we have 
 \begin{equation}\label{eq4-1}
\a|_{\set(\b_1)\cup \set(\b_2)\cup \set(\b_3)}=\left(
\begin{array}{cccccccc}
A_1  & A_2 \\
X_1 & X_2  
 \end{array}
 \right)\left(
\begin{array}{cccccccc}
B  \\
X_3 
 \end{array}
 \right)\left(
\begin{array}{cccccccc}
 C  \\
 X_4 
 \end{array}
 \right),
\end{equation}
 
 where
 \begin{enumerate}
 \item[a)]  $\b_1=(A_1A_2)$, $\b_2=(B)$, $\b_3=(C)$, and the blocks $X_1, X_2, X_3, X_4$ came from $\b'_1$, $\b'_2$ and $\b'_3$ (not necessarily in this order), 
 \item[b)] The strings $X_1X_2$ and $X_2X_1$ are not blocks in any cycle of $\b$ and the cycle that contains $X_3$ (resp. $X_4$) is of length grater that $l_2$ (resp. $l_3$).
 \item[c)] The set of all points in $X_1, X_2, X_3, X_4$ should be equal  to $\set(\b'_1)\cup \set(\b'_2)\cup \set(\b'_3)$.
 \end{enumerate}
From conditions a) to c) above follows that the blocks $X_3$ and $X_4$ are blocks in $\b_1'$ (which implies that $l_1=l_2+l_3$) and that $X_1$ and $X_2$ are improper blocks in $\b_2'$ and $\b_3'$  (not necessarily in this order). We have two cases:
\begin{claim}\label{claim211}
\begin{enumerate}
\item If $l_2 =l_3$ then the number of permutations  in $C([2, 1,1], \b)$ satisfying this extra condition is
\[
|C_{S_n}(\b)| \sum_{l_2 \geq 1}{l_2}^3c_{2l_2}{c_{l_2} \choose 2}.
\]
\item  If $l_2 \neq l_3$ then the number of permutations in $C([2, 1,1], \b)$ satisfying this extra condition is
\[
|\cen(\b)|\sum_{l_3 >l_2 \geq 1}(l_2+l_3)l_2l_3c_{l_2+l_3}c_{l_2}c_{l_3}, 
\]
where we are assuming, without lost of generality, that $l_3 >l_2$.
\end{enumerate}
\end{claim}
\begin{proof}

\textit{1).}
  As $|\b_2'|=|\b_3'|$ then it follows that $l_1=2l_2$. Now we count all bijections using  matrix notation~\ref{eq4-1}. There are $l_1$, $l_2$, and $l_2$ ways to select the points that will be the first points of the blocks $A_1A_2, B, C$, respectively. There are $l_2$ and $l_2$ ways to select the first element of the block $X_1$ and $X_2$, respectively. There are $l_1$ ways to select the first point of the block $X_3$. Once the first point of $X_3$ is selected the first point of the block $X_4$ is uniquely determined.  
 From Proposition~\ref{partecentral} (with $H=\{l_1, l_2\}, h_{l_1}=1, h_{l_2}=2, r_0=l_1^2l_2^4$) we have that the number of permutations $\a$ that satisfies the desired conditions is $l_2^3c_{2l_2}{c_{l_2} \choose 2}|\cen(\b)|$. Finally, the result follows by summing over all possible lengths of cycles $\b_1, \b_2, \b_3$ 
\textit{Proof of 2).} $l_2 \neq l_3$. Without lost of generality we can assume that $l_3 >l_2$.  
The enumeration of all bijections as in equation~\ref{eq4-1} is similar to the previous case. Then, from Proposition~\ref{partecentral}  (with $H=\{l_1, l_2, l_3\}, h_{l_i}=1$, for every $i$, and $r_0=l_1^2l_2^2l_3^2$)  we have that the desired number is $(l_2+l_3)l_2l_3c_{l_2+l_3}c_{l_2}c_{l_3}|\cen(\b)|$ and the result follows by summing over all possible lengths of cycles $\b_1, \b_2, \b_3$. 
 \end{proof}

{\bf Part B:} The number  $c([2, 2], \b)$

 Let $\b_1$ and $\b_2$ the cycles of $\b$ of lengths $l_1, l_2$, respectively, that will contains $2$ b.c.p of $\a$ and $\b$ every one. By Proposition~\ref{numberbc}, there exist exactly two cycles $\b'_1$ and $\b'_2$ such that $\a(\set(\b_1) \cup \set(\b_2))=\set(\b'_1)\cup \set(\b'_2)$, with $|\b'_1|=|\b_1|$ and $|\b'_2|=|\b_2|$. By Theorem~\ref{1cycleblocks}) we have that
\begin{equation}\label{eq4-2}
\a|_{\set(\b_1)\cup \set(\b_2)}=\left(
\begin{array}{cccccccc}
A_1  & A_2  \\
X_1 & X_2 
 \end{array}\right)\left(
\begin{array}{cccccccc}
B_1 & B_2  \\
X_3 & X_4 
 \end{array}\right),
\end{equation}
where 
\begin{enumerate}
\item[a)] $\b_1=(A_1A_2)$, $\b_2=(B_1B_2)$, and $X_1, X_2, X_3, X_4$ are blocks in $\b'_1$ and $\b'_2$.
 \item[b)] The strings $X_1X_2$ (resp. $X_3X_4$) and $X_2X_1$ (resp. $X_4X_3$) are not blocks in any cycle of $\b$.
 \item[c)] The set of all points in $X_1, X_2, X_3, X_4$ should be equal  to $\set(\b'_1)\cup \set(\b'_2)\cup \set(\b'_3)$. 
\end{enumerate}
From points a) to c) above we have that $X_1$ and $X_2$ (resp. $X_3$ and $X_4$) belongs to different cycles. Without lost of generality we can assume that $\b_1'=(X_1X_3)$ and that $\b_2'=(X_2X4)$.

\begin{claim}\label{claim22}
 \begin{enumerate}
\item  If $l_1 =l_2$, then the number $c([2, 2], \b)$ with this extra condition is
\[
|C_{S_n}(\b)|\sum_{l_1\geq 2}l_1{l_1 \choose 2} {c_{l_1} \choose 2}.
\]
\item If $l_1 \neq l_2$, then the number $c([2, 2], \b)$ with this extra condition is

\[
|\cen(\b)|\sum_{l_2>l_1\geq 2}l_1(l_1-1)l_2c_{l_1}c_{l_2}.
\]
\end{enumerate}
\end{claim}
\begin{proof}[Proof of Claim~\ref{claim22}]
{\it (1)}  $l_1 =l_2$: We count all possible bijections $\a|_{\set(\b_1)\cup \set(\b_2)}$. There are $l_1$ and $l_1$ ways to select the first point of  block $A_1A_2$ and $B_1B_2$, respectively. There are $l_1$ ways to select the point that will be the first point of $X_1$ and $l_1-1$ ways to select the first point of $X_3$. After that, the lengths of blocks $A_1$ and $B_1$ are unique determined (and hence also the lengths of blocks $A_2$ and $B_2$).  There are $l_1$ ways to select the first point of $X_2$ and the first point of $X_4$ is uniquely determined. Then from Proposition~\ref{partecentral} (with $H=\{l_1\}, h_{l_1}=2$ and $r_0=l_1^4(l_1-1)$)  it follows that the number of permutations $\a$ satisfying the desired conditions is $l_1{c_{l_1} \choose 2}{l_1 \choose 2}|\cen(\b)|$. Finally, the result follows by summing over all possible values of $l_1$.

{\it (2)} $l_1 \neq l_2$: without lost of generality we can assume that $l_2 > l_1$. Notice that $l_1\geq 2$.  We make the enumeration of all possible bijections $\a|_{\set(\b_1)\cup \set(\b_2)}$. There are $l_1$, $l_2$ ways to select the first points of $A_1A_2$ and $B_1B_2$, respectively. There are $l_1$ ways to select the point that will be the first point of block $X_1$ and $l_1-1$ ways to select  the first point  of block $X_3$. After that, the lengths of the blocks $X_2$ and $X_4$ are uniquely determined, so that it is enough to select the first point of block $X_2$ (in $l_2$ ways). From Proposition~\ref{partecentral} (with $H=\{l_1, l_2\}, h_{l_1}=h_{l_2}=1$ and $r_0=l_1^2(l_1-1)l_2^2$)   we have that the number of permutations with the desired characteristics is $l_1(l_1-1)l_2c_{l_1} c_{l_2} |\cen(\b)|$. The result follows after summing over all possible values of the cycle lengths.
\end{proof}

{\bf Part C:}
\[c([3, 1], \b)=|C_{S_n}(\b)| \sum_{i \geq 1, j \geq i+2} ij(j-i-1)c_ic_j.
\]

Let $\a$ be any permutation that $4$-commutes with $\b$ and such that $(3, \b)$-commutes with an $l_1$-cycle, say $\b_1$, of $\b$ and that $(1, \b)$-commutes with an $l_2$-cycle, say $\b_2$, of $\b$.  By Proposition~\ref{numberbc}, there exist exactly two cycles $\b'_1$ and $\b'_2$ such that $\a(\set(\b_1) \cup \set(\b_2))=\set(\b'_1)\cup \set(\b'_2)$, with $|\b'_1|=|\b_1|$ and $|\b'_2|=|\b_2|$. From Theorem~\ref{1cycleblocks}, and without lost of generality, we have 
\begin{equation}\label{eq4-3}
\a|_{\set(\b_1)\cup \set(\b_2)}=\left(
\begin{array}{cccccccc}
A_1  & A_2 & A_3 \\
X_1 & X_2 &X_3   
 \end{array}\right)\left(
\begin{array}{cccccccc}
 B  \\
Y  
 \end{array}\right),
\end{equation}
where $\b_1=(A_1A_2A_3)$, $\b_2=(B)$, $Y$ is a block in $\b_1'$ (because $|Y|=l_2$) and $X_1, X_2, X_3$ are blocks in $\b_1'$ and $\b_2'$ such that $X_iX_{i+1}$ is not a block in any cycle of $\b$. Without lost of generality we can assume that $\b_1'=(YX_1X_3)$ which implies that $\b_2'=(X_2)$. Notice that $l_1 \geq l_2+2$. Now we make the enumeration of all possible bijections in equation~\ref{eq4-3}.  There are $l_1$, $l_2$ ways to select the points that will be the first points of blocks $A_1A_2A_3$ and $B$, respectively. There are $l_1$ ways to select the point that will be the first point of $Y$. After that, the first point of $X_1$ is uniquely determined (because $|Y|=|B|$). There are  $l_1-l_2-1$ ways to select  the first point for the block $X_3$. There are $l_2$ ways to select the first point of the block $X_2$. Then by Proposition~\ref{partecentral} we have that the desired number is 
$
l_1l_2(l_1-l_2-1)c_{l_1}c_{l_2}|\cen(\b)|.
$
Finally we obtain our result by summing over all possible lengths.

With this last case the proof of theorem is completed. 
\end{proof}

\section{Fixed-point free involutions}\label{involutions}
In this section we show formulas for $c(k, \b)$ when $\b$ is a transposition and also when it is a fixed-point free involution. First we prove the following proposition. Let $\rm{fix}(\b)$ denotes the set of fixed points of $\b$ and $\rm{supp}(\b)=[n] \setminus \rm{fix}(\b)$. 

\begin{proposition}\label{boundk2}
Let $\a, \b \in S_n$ and let $H(\a\b, \b\a)=k$, then $0 \leq k \leq 2 | \rm{supp}(\b)|$.
\end{proposition}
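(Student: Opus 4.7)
The lower bound $k \geq 0$ is immediate from the definition of the Hamming metric, so the substance is the upper bound.

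My plan is to partition the set of bad commuting points according to whether they lie in $\mathrm{fix}(\b)$ or in $\mathrm{supp}(\b)$, and bound each part separately by $|\mathrm{supp}(\b)|$. Let $F=\mathrm{fix}(\b)$ and $S=\mathrm{supp}(\b)$, so $[n]=F\sqcup S$. The set of b.c.p.\ of $\a$ and $\b$ lying in $S$ is trivially bounded by $|S|$, so only the b.c.p.\ that sit in $F$ require argument.

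For the main step, I would characterize precisely which points of $F$ are b.c.p. If $a\in F$, then $\a\b(a)=\a(a)$, so $a$ is a b.c.p.\ iff $\b(\a(a))\neq \a(a)$, that is, iff $\a(a)\in S$. Consequently,
\[
\{a\in F : a \text{ is a b.c.p.}\}\;=\;\{a\in F : \a(a)\in S\}\;\subseteq\;\a^{-1}(S).
\]
Since $\a$ is a bijection, $|\a^{-1}(S)|=|S|$, and hence there are at most $|S|$ b.c.p.\ inside $F$.

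Combining the two bounds, the total number of b.c.p.\ satisfies
\[
k \;\leq\; |S| + |S| \;=\; 2|\mathrm{supp}(\b)|,
\]
which is the desired inequality. There is no real obstacle here; the only subtle point is the observation that a fixed point $a$ of $\b$ is a b.c.p.\ exactly when $\a$ sends it out of $\mathrm{fix}(\b)$, and this is what makes the bijectivity of $\a$ give the sharp factor of $2$ rather than something worse.
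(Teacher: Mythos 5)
Your proof is correct and follows essentially the same route as the paper: bound the b.c.p.\ in $\mathrm{supp}(\b)$ trivially by $|\mathrm{supp}(\b)|$, and observe that a fixed point $a$ of $\b$ is a b.c.p.\ exactly when $\a(a)\in\mathrm{supp}(\b)$, so injectivity of $\a$ gives at most $|\mathrm{supp}(\b)|$ more. The only cosmetic difference is that you verify the key fact about fixed points by the direct computation $\a\b(a)=\a(a)$, whereas the paper cites its Theorem~3.8 for it; your version is, if anything, more self-contained.
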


\begin{proof}
If $\a$ and $\b$ commute then $k=0$. If $\b$ does not have fixed points then $|\rm{supp}(\b)|=n$ and $k < 2|\rm{supp}(\b)|$. Now, let $x \in \rm{fix}(\b)$. If $\b\a(x) \neq \a\b(x)$ then $\a(x) \in \rm{supp}(\b)$ (Theorem~\ref{1cycleblocks}). Thus, $\a$ does not commute with $\b$ on at most $| \rm{supp}(\b)|$ fixed points of $\b$ and then $k \leq 2 | \rm{supp}(\b)|$.  
\end{proof}
The following proposition is a consequence of Proposition~\ref{12},  Theorem~\ref{formulak=3}, Theorem~\ref{casek=4} and Proposition~\ref{boundk2}.  

\begin{proposition}\label{transpo}
Let $\b \in S_n$ be any transposition. Then
\begin{enumerate}
\item $c(0, \b)=2(n-2)!$, $n >1$.
\item $c(3,\b)=4(n-2)(n-2)!$, $n >1$.
\item $c(4,\b)=(n-2)(n-3)(n-2)!$, $n >2$.
\item $c(k, \b)=0$, for $5\leq k\leq n$.
\end{enumerate}
\end{proposition}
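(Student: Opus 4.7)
The plan is to treat each of the four items as a direct specialization of the more general results already established in Sections~\ref{perthatkcommute}, \ref{numberckb} and \ref{ckb34}. For a transposition $\b$, the cycle type vector is $(c_1, c_2, \dots, c_n) = (n-2, 1, 0, \dots, 0)$, and the centralizer size is $|C_{S_n}(\b)| = \prod_i i^{c_i} c_i! = 1^{n-2}(n-2)! \cdot 2^1 \cdot 1! = 2(n-2)!$. This already gives item (1), via Proposition~\ref{12}, which also gives $c(1,\b)=c(2,\b)=0$ for free.

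For item (2), I would plug the above cycle type into Theorem~\ref{formulak=3}. The first sum there vanishes because $c_\ell = 0$ for every $\ell \geq 3$, while the double sum reduces to the single term $\ell=1, m=2$, contributing $1 \cdot 2 \cdot c_1 \cdot c_2 = 2(n-2)$. Multiplying by $|C_{S_n}(\b)| = 2(n-2)!$ gives $c(3,\b) = 4(n-2)(n-2)!$.

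For item (3), I would substitute the same cycle type into each of the four summands of Theorem~\ref{casek=4}. In $c(\lam_{4^{(1)}},\b)$ every binomial $\binom{i}{4}$ has $c_i = 0$, so the contribution is $0$. In $c([3,1],\b)$ the condition $j \geq i+2$ with both $c_i, c_j$ nonzero forces $i=1, j \geq 3$, which makes $c_j = 0$, so again $0$. In $c([2,2],\b)$ the first sum needs $i \geq 2$ and $\binom{c_i}{2} \neq 0$, but $c_2=1$, and the second needs two distinct cycle lengths both $\geq 2$, of which only one exists; hence $0$. The only surviving piece is the first sum of $c([2,1,1],\b)$ with $i=1$, giving $1^3 \cdot c_2 \cdot \binom{c_1}{2} = \binom{n-2}{2}$, whose product with $|C_{S_n}(\b)| = 2(n-2)!$ is $(n-2)(n-3)(n-2)!$, as claimed.

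Finally, item (4) is essentially immediate from Proposition~\ref{boundk2}: since $|\mathrm{supp}(\b)| = 2$ for a transposition, any $\a$ satisfies $H(\a\b,\b\a) \leq 4$, so $c(k,\b) = 0$ for $k \geq 5$. No step presents a genuine obstacle; the only care needed is in checking which index pairs in the double sums of Theorem~\ref{casek=4} survive, and in verifying the small-$n$ hypotheses (for instance, $c(4,\b)$ only makes sense for $n \geq 3$ since otherwise $\binom{n-2}{2}=0$ trivially and the cycle type is different).
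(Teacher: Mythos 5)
Your proposal is correct and follows exactly the route the paper takes: the paper states Proposition~\ref{transpo} as a direct consequence of Proposition~\ref{12}, Theorem~\ref{formulak=3}, Theorem~\ref{casek=4} and Proposition~\ref{boundk2}, without even writing out the substitution of the cycle type $(n-2,1,0,\dots,0)$ that you carry out explicitly. All of your term-by-term evaluations (including the identification of $\binom{n-2}{2}$ as the only surviving summand for $k=4$) check out.
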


We have noted that formulas  (1), (2) and (3) in previous proposition coincide with the number of permutations of $n>1$ having exactly $2$, $3$ and $4$ points, respectively,  on the boundary of their bounding square \cite{deutsch} (A208529, A208528 and A098916 in \cite{oeis}, respectively).

Now we give a formula for $c(k, \b)$ when $\b$ is any fixed-point free involution. 
 Let $a(n)$ be the ``number of derarenged matchings of $2n$ people with partners (of either sex) other than their spouse" (taken from the Comments for A053871 in~\cite{oeis}).

\begin{theorem}\label{fpfinvol}
Let $\b \in S_{2m}$ be a fixed-point free involution, $m \geq 2$. Then 
\begin{enumerate}
\item $c(k, \b)=0$, for $k$ and odd integer,
\item $c(k, \b)=2^{m}m!{m \choose j}a(j)$, for $k=2j$, $j=0, 1, 2, \dots$
\end{enumerate}
\end{theorem}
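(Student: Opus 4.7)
The plan is to handle (1) by a structural argument using Theorem~\ref{1cycleblocks}, and (2) via Algorithm~\ref{algo2} together with a reformulation of the central bijection count as counting deranged matchings.

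For part (1), every cycle of $\b$ has length $2$. The key observation is that if $\a$ $(1,\b)$-commuted with some $2$-cycle $\b_j$, then case (1) of Theorem~\ref{1cycleblocks} would give $\a\b_j\a\inv=(P_1)$ with $P_1$ a \emph{proper} block of length $2$ in some cycle of $\b$, forcing that cycle to have length strictly greater than $2$, a contradiction. Hence on every cycle of $\b$ the number of b.c.p.\ is either $0$ or $2$, which forces $k=H(\a\b,\b\a)$ to be even.

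For part (2), write $k=2j$. Any $\a\in C(2j,\b)$ fails to commute with $\b$ on exactly $j$ of the $m$ $2$-cycles of $\b$, so Algorithm~\ref{algo2} applies with $H=\{2\}$ and $h_2=j$. By Proposition~\ref{partecentral}, it suffices to determine $r_0$, the number of admissible bijections $\a|_R\colon R\to R'$ once the source cycles $\b_1,\dots,\b_j$ and target cycles $\b_1',\dots,\b_j'$ have been fixed (here $R$ and $R'$ are the respective unions of their supports). To compute $r_0$, apply case (2) of Theorem~\ref{1cycleblocks} to each $\b_i$: this yields $\a\b_i\a\inv=(P_1P_2)$ with $|P_1|=|P_2|=1$ and neither $P_1P_2$ nor $P_2P_1$ a block in any cycle of $\b$. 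Since every cycle of $\b$ has length $2$, this last condition says exactly that the image $\a(\set(\b_i))=\{P_1,P_2\}$ is \emph{not} the support of any $\b_k'$.

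Consequently the $j$ image pairs $\{\a(\set(\b_i))\}_{i=1}^{j}$ form a perfect matching on $R'$ that shares no edge with the target matching $\{\set(\b_k')\}_{k=1}^{j}$. This is precisely a deranged matching of $2j$ paired elements, which by the defining description of A053871 is counted by $a(j)$. Given such an image matching, there are $j!$ ways to pair the source cycles with the image edges and $2^j$ ways to choose the orderings within each pair, hence $r_0=j!\,2^j\,a(j)$. Substituting this $r_0$ together with $|C_{S_n}(\b)|=2^m m!$ into Proposition~\ref{partecentral} gives
\[
c(2j,\b)=j!\,2^j\,a(j)\cdot 2^m m!\cdot\frac{1}{j!\,2^j}\binom{m}{j}=2^m m!\binom{m}{j}a(j),
\]
as claimed. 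The main subtlety is the identification of $r_0$ with the number of deranged matchings of $2j$ people; once that combinatorial reformulation is clear, the rest follows directly from the machinery already developed.
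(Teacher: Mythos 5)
Your proposal is correct and follows essentially the same route as the paper: part (1) is the observation (Proposition~\ref{corone}, itself an instance of Theorem~\ref{1cycleblocks}(1)) that a $2$-cycle cannot contain exactly one b.c.p.\ when all cycles have length $2$, and part (2) computes $r_0=2^j\,j!\,a(j)$ for Proposition~\ref{partecentral} by identifying the admissible restrictions $\a|_R$ with deranged matchings of $2j$ elements, exactly as the paper does. Your phrasing of the count --- image matching first ($a(j)$ ways), then assignment of source cycles to edges ($j!$) and orderings within pairs ($2^j$) --- is just a reorganization of the paper's ``block permutation, then re-pairing'' description.
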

\begin{proof}
 From Proposition~\ref{corone} we have that if $\a$ does not commute on one point in cycle $\b_i$ of $\b$ then $\a$ does not commute on the two points in $\b_i$, then if $k$ is odd we obtain the desired result.  Now we will obtain all the permutations $\a$ that $k$-commutes with $\b$ and  that do not commute on $j$ $2$-cycles, $\b_1, \b_2,  \dots,\b_j$,  of $\b$. By Proposition~\ref{numberbc} there exists exactly $j$ $2$-cycles, $\b_1', \b_2', \dots, \b_j'$ of $\b$ such that $\a(\bigcup_{i=1}^j\set(\b_i))=\bigcup_{i=1}^j\set(\b_i')$. We construct $\a|_{\bigcup_{i=1}^j\set(\b_i)}$ as follows: First put 
\[
\a'|_{\bigcup_{i=1}^j\set(b_i)}=\left(
\begin{array}{ccccccccccccccc}
  B_1   \\
B_{i_1'} \\
 \end{array}
\right)\left(
\begin{array}{ccccccccccccccc}
  B_2  \\
 B_{i_2'} \\
 \end{array}
\right)\dots \left(
\begin{array}{ccccccccccccccc}
B_j  \\
 B_{i_j'}\\
 \end{array}
\right),
\]
where $B_i$ is an improper block of the cycle $\b_i$ and $B_i'$ is an improper block of $\b_i'$, for every $i$,  and where $B_{i_1'}  B_{i_2'} \dots   B_{i_j'}$ is any block permutation of the $j$ improper blocks of $\b_1', \dots,\b_k'$ (there are $j!$ such permutations). Until this step $\a'|_{\bigcup_{i=1}^j\set(b_i)}$ is a permutation that commutes with $\b$ on the cycles $\b_1, \dots, \b_n$.  Now we can see every block $B_i'$ as a partner $B_i'=xy$, and we finish the construction of $\a|_{\bigcup_{i=1}^j\set(b_i)}$ by re-paired the elements in the blocks $B_{i_1'}, B_{i_2'},  \dots ,  B_{i_j'}$ in such a way that no point is with its original partner, this can be made in $a(j)$ ways. As there are $2^j$ ways to select the first element in the blocks $B_1, \dots, B_j$, we have that $r_0=2^j j!a(j)$. Finally, by Proposition~\ref{partecentral} (with $\ell=2$, $c_\ell=m$, $h_\ell=j$ and $|C_{S_n}(\b)|=2^mm!$) we obtain
\[
2^j j!a(j){m \choose j}^2 \frac{(m-j)!}{2^jm!}2^mm!=2^mm! {m \choose j}
a(j).
\]

\end{proof}

\begin{theorem} 
Let $\b \in S_{2m}$ be a fixed-point free involution. Then
\[
\sum_{m, j \geq 0} c(2j, \b) \frac{z^m}{m!}\frac{u^j}{j!}= \left(\left( 1-2\,z \right) \sqrt{1-4\,{\frac {zu}{1-2\,z}}} \exp \left(\frac {2zu}{1-2\,z}\right) \right)^{-1}.
\]
\end{theorem}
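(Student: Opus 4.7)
The plan is to substitute the explicit formula $c(2j,\b)=2^{m}m!\binom{m}{j}a(j)$ from Theorem~\ref{fpfinvol} into the bivariate generating function, swap the order of summation, and identify the inner series as (a shift of) the exponential generating function (EGF) of the sequence $a$. After that, knowing the EGF of A053871 in closed form finishes the argument.

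Since $c(k,\b)=0$ for odd $k$, the left-hand side of the claim equals
\[
F(z,u)\ :=\ \sum_{m\geq 0}\sum_{j=0}^{m} 2^{m}\binom{m}{j}\,\frac{a(j)}{j!}\,z^{m}u^{j}.
\]
Exchanging the two summations and applying the standard identity $\sum_{m\geq j}\binom{m}{j}(2z)^{m}=(2z)^{j}/(1-2z)^{j+1}$ gives
\[
F(z,u)\ =\ \frac{1}{1-2z}\sum_{j\geq 0}\frac{a(j)}{j!}\left(\frac{2zu}{1-2z}\right)^{\!j}\ =\ \frac{A(w)}{1-2z},\qquad w:=\frac{2zu}{1-2z},
\]
where $A(w):=\sum_{j\geq 0}a(j)\,w^{j}/j!$ is the EGF of A053871.

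The remaining task, which is the main combinatorial content, is to show that $A(w)=e^{-w}(1-2w)^{-1/2}$. The integer $a(j)$ counts perfect matchings on $2j$ points sharing no pair with a fixed reference matching on the same point set. Inclusion–exclusion on the set of preserved reference pairs yields
\[
a(j)\ =\ \sum_{k=0}^{j}(-1)^{k}\binom{j}{k}\,\bigl(2(j-k)-1\bigr)!!\ =\ \sum_{k=0}^{j}(-1)^{k}\binom{j}{k}\,\frac{(2(j-k))!}{2^{j-k}(j-k)!},
\]
which exhibits $\{a(j)\}$ as the exponential (binomial) convolution of the sequences with EGFs $e^{-w}$ (for $(-1)^{k}$) and $(1-2w)^{-1/2}$ (the latter via the standard expansion $(1-2w)^{-1/2}=\sum_{k\geq 0}\bigl[(2k)!/(2^{k}k!)\bigr]w^{k}/k!$). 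Therefore $A(w)=e^{-w}(1-2w)^{-1/2}$. Substituting into $F(z,u)=A(w)/(1-2z)$ and noting $1-2w=1-4zu/(1-2z)$ reproduces the claimed expression. The algebraic steps are mechanical; the only nontrivial point is the inclusion–exclusion derivation (or citation) of the closed form for the EGF of A053871, which I expect to be the one place requiring genuine care.
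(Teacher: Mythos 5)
Your proposal is correct and follows essentially the same route as the paper's (sketched) proof: substitute the closed form $c(2j,\b)=2^m m!\binom{m}{j}a(j)$ from Theorem~\ref{fpfinvol}, interchange summations, and recognize the inner sum as the EGF of A053871 evaluated at $w=2zu/(1-2z)$. The only difference is that you supply details the paper omits — in particular the inclusion–exclusion derivation of $\sum_j a(j)w^j/j!=e^{-w}(1-2w)^{-1/2}$, which the paper simply cites as well known — and your computations check out.
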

\begin{proof}(Sketch)
We use the well-known EGF for $a(n)$ (A053871 in~\cite{oeis})
\[
\sum_{n \geq 0} a(n) \frac{x^n}{n!}=\left(\exp(x)\sqrt{1-2x}\right)^{-1},
\]
and the result follows by using standard techniques of bivariate generating functions similarly as in the proof of Theorem~\ref{nciclo}.
\end{proof}

\section{Final comments}
 As we have seen in previous sections, the problem of computing in an exact way the number $c(k, \b)$ can be a difficult task. This is because we have that the number of cases can be equal to the number of partitions $[k_1, \dots  , k_h]$ of $k$, with $k_i\geq 1$. Even more, once we have selected a partition of $k$, it can be difficult  to compute in exact way all the permutations with the desired properties. For example, if we have that $\b$ has at least two cycles $\b_1$ and $\b_2$ of lengths $l$ and $m$, respectively, and we look for all the permutations that $k$-commute with $\b$ and that $(k_1, \b)$-commutes with $\b_1$ and that $(k_2, \b)$-commutes with $\b_2$ then $\a|_{\set(\b_1) \cup \set(\b_2)}$ must look like
 \begin{equation*}
\a|_{\set(\b_1)\cup \set(\b_2)}=\left(
\begin{array}{cccccccc}
A_1  & \dots  &A_{k_1}   \\
X_1' & \dots  &X_{k_1}'  
 \end{array}\right)\left(
\begin{array}{cccccccc}
 B_1 & \dots  & B_{k_2}   \\
 Y_1' & \dots  & Y_{k_2}'  
 \end{array}\right),
\end{equation*}
where we can have many possibilities for the selection of blocks $X_1', \dots , X_{k_1}', Y_1', \dots , Y_{k_2}'$, that depends  of the lengths of the cycles $\b_1, \b_2$. And after we have selected these blocks, we have the problem of the number of ways in which we can arranged it. However, it is possible that for some specific cycle type of permutations, the problem can be managed. We leave as an open problem to find another technique, or a refinement of the presented in this article, to compute $c(k, \b)$ in exact way, or at least to obtain non trivial upper and lower bounds for this number.

\section{Acknowledgements}
The authors would like to thank L. Glebsky for very useful suggestions and comments. The authors also would like to thank Jes\'us Lea\~nos for his careful reading of the paper and his very valuable suggestions. The second author was supported by the European Research Council (ERC) grant of Goulnara Arzhantseva, grant agreement No. 259527 and by PROMEP (SEP, M\'exico) grant UAZ-PTC-103 (No. 103.5/09/4144
and No. 103.5/11/3795)

\end{document}